\documentclass[11pt]{amsart}

\usepackage[T1]{fontenc}
\usepackage{lmodern}
\usepackage{amsmath,amssymb,amsthm,mathtools}
\usepackage{enumitem}
\usepackage{microtype}
\usepackage{hyperref}
\usepackage[nameinlink,capitalize]{cleveref}
\usepackage{mathrsfs}
\usepackage{xcolor}
\usepackage{tikz}
\usetikzlibrary{positioning,calc}
\crefname{theorem}{Theorem}{Theorems}
\crefname{proposition}{Proposition}{Propositions}
\crefname{lemma}{Lemma}{Lemmas}
\crefname{corollary}{Corollary}{Corollaries}
\crefname{remark}{Remark}{Remarks}
\crefname{definition}{Definition}{Definitions}
\crefname{question}{Question}{Questions}
\crefname{example}{Example}{Examples}

\hypersetup{
  colorlinks=true,
  linkcolor=blue,
  citecolor=blue,
  urlcolor=blue
}

\newtheorem{theorem}{Theorem}[section]
\newtheorem{proposition}[theorem]{Proposition}
\newtheorem{lemma}[theorem]{Lemma}
\newtheorem{corollary}[theorem]{Corollary}
\newtheorem{remark}[theorem]{Remark}
\newtheorem{question}[theorem]{Question}

\theoremstyle{definition}
\newtheorem{definition}[theorem]{Definition}

\newtheorem{example}[theorem]{Example}

\newcommand{\OO}{\mathcal O}

\newcommand{\Pic}{\operatorname{Pic}}
\newcommand{\Alb}{\operatorname{Alb}}
\newcommand{\rk}{\operatorname{rk}}
\newcommand{\NS}{\operatorname{NS}}
\newcommand{\cris}{\mathrm{cris}}
\newcommand{\et}{\mathrm{\acute et}}

\newcommand{\Jac}{\operatorname{Jac}}

\newcommand{\PP}{\mathbf P}
\newcommand{\FF}{\mathbf F}

\title[Arakelov Inequalities in Positive Characteristic]
{Arakelov inequalities for fibered surfaces in positive characteristic}

\author{Hao Max Sun}
\address{Department of Mathematics, Shanghai Normal University, Shanghai, 200234, P.\ R.\ China}
\email{hsun@shnu.edu.cn}

\author{Wan-Yuan Xu}
\address{Department of Mathematics, Shanghai Normal University, Shanghai, 200234, P.\ R.\ China}
\email{wanyuanxu@shnu.edu.cn}

\subjclass[2020]{Primary 14D06; Secondary 14F30, 14H10, 14G17}
\keywords{Arakelov inequality, semistable fibration, Hodge Witt surface, singular fibers, positive characteristic}

\begin{document}

\begin{abstract}
Let $f:S\to C$ be a relatively minimal semistable fibration of genus $g\ge2$ over an algebraically closed field of characteristic $p>0$ with smooth and geometrically connected generic fiber. Let $\Sigma\subset C$ denote the set of points over which the fibers are singular, with $s=|\Sigma|$. If the total surface $S$ is Hodge Witt and $s\ge2$, we prove the classical Arakelov inequality
$$
 \deg f_*\omega_{S/C}\le \frac g2\deg\Omega_C^1(\log\Sigma),
$$
where $\omega_{S/C}$ denotes the relative dualizing sheaf. For $C=\mathbb P^1$, we obtain the stronger estimate
$$
 \deg f_*\omega_{S/\mathbb P^1}\le \frac g2(s-2)-\frac12 b_1(S),
$$
where $b_1(S)$ is the first Betti number of $S$. We also construct a genus-$2$ semistable fibration over $\mathbb P^1$ with exactly $4$ singular fibers in characteristic $5$. Its total surface is Hodge Witt, and
$$
 \deg f_*\omega_{S/\mathbb P^1}=2=\frac g2(s-2).
$$
Thus Nguyen's lower bound $s\ge4$ and the Hodge--Witt Arakelov inequality are both sharp. As applications, we derive a canonical-class inequality and a Szpiro-type inequality with coefficients linear in $g$.
\end{abstract}

\maketitle

\section{Introduction}

Let $f:S\to C$ be a relatively minimal semistable fibration of genus $g\ge2$ from a smooth projective surface to a smooth projective curve over an algebraically closed field of characteristic $p>0$. We assume that the geometric generic fiber is smooth and geometrically connected. Denote the genus of $C$ by $b$. Let $\Sigma\subset C$ be the set of the image of singular fibers, and put $s=|\Sigma|$. The classical Arakelov inequality is
\begin{equation}\label{eq:classical-arakelov-intro}
 \deg f_*\omega_{S/C}
 \le \frac g2\deg\Omega_C^1(\log\Sigma)
 =\frac g2(2b-2+s).
\end{equation}
In characteristic zero, Arakelov inequalities are a key boundedness input in proofs of the Shafarevich finiteness theorem: For fixed $C$ and $\Sigma$, there are only finitely many isomorphism classes of non-isotrivial families of smooth projective curves over $C-\Sigma$.

Arakelov first proved this type of inequality for families of curves over a complex base \cite{Arakelov1971}. Faltings proved the corresponding result for families of abelian varieties \cite{Faltings1983}. Later, Jost--Zuo and Viehweg--Zuo developed a Higgs-bundle approach to Arakelov inequalities \cite{JostZuo2002,ViehwegZuo2006}. Lu--Tan--Yu--Zuo gave another proof for fibered surfaces over $\mathbb C$. Their proof is based on a lower bound for $h^{1,1}(S)$ \cite{LTYZ2014}. Arakelov inequalities also play an important role in the study of Shimura subvarieties in the Torelli locus; see the works of Chen--Lu--Zuo and
Lu--Zuo \cite{ChenLuZuo2016,LuZuo2017,LuZuo2019}.

The situation in positive characteristic is different. Lu-Sheng-Zuo proved an Arakelov-type inequality with coefficient $2g^2$ under a nonzero Kodaira--Spencer assumption \cite{LuShengZuo2009}. Yuan-Zhang obtained a general bound with coefficient $g^2$ \cite{YuanZhang2014}. Under a lifting hypothesis, Xu recovered the classical coefficient $\frac{g}{2}$ \cite{Xu2018}. 

This raises a natural question as follows. \begin{question}Can the coefficient $\frac{g}{2}$ be recovered from an intrinsic condition on the total surface, without the strong lifting assumption? \end{question}

Our first main result gives such a criterion in terms of Hodge Witt cohomology.

\begin{theorem}\label{thm:hodge-witt}
Let $f:S\to C$ be a relatively minimal semistable fibration whose generic fiber is smooth and geometrically connected of genus $g\ge2$. Assume that $S$ is Hodge Witt and that $s\ge2$. Then
\begin{equation}\label{eq:HW-arakelov}
 \deg f_*\omega_{S/C}\le \frac g2(2b-2+s).
\end{equation}
\end{theorem}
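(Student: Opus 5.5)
We follow the Lu--Tan--Yu--Zuo strategy \cite{LTYZ2014}, which bounds $\deg f_*\omega_{S/C}$ through a lower bound for $h^{1,1}(S)$. In positive characteristic we replace Hodge symmetry and the Hodge-to-de Rham degeneration with crystalline and Hodge--Witt input. Put $\chi_f=\deg f_*\omega_{S/C}$, $K_f^2=\omega_{S/C}^2$, and let $e_f=\sum_{t\in\Sigma}\delta_t$ be the number of nodes in the singular fibers. Noether's formula gives $12\chi_f=K_f^2+e_f$, and $e_f=c_2(S)-4(g-1)(b-1)$.

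The Betti numbers come first. We compute $b_2(S)$ from $c_2(S)=2-2b_1+b_2$. Then $b_2$ is bounded below by the classes it must contain: the image of $\NS(S)\otimes\mathbf Q_\ell$, of rank $\rho\ge 2+\sum_{t\in\Sigma}(\#\text{components of }F_t-1)$, and a transcendental part. For the transcendental part we use the Hodge--Witt hypothesis. For a Hodge--Witt surface the slope spectral sequence degenerates at $E_1$ (Illusie--Raynaud, Ekedahl), and so $H^2_{\cris}(S/W)\otimes K$ has a slope decomposition. Its slope-$[0,1)$ part has dimension $h^2(W\OO_S)\otimes K$, which relates to $h^{0,2}$ up to $\dim$ of torsion. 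By duality, its slope-$(1,2]$ part has the same dimension. This yields the key inequality
\[
 b_2(S)\ \ge\ \rho(S)+2\bigl(p_g(S)-\epsilon\bigr),
\]
where $\epsilon$ accounts for the defect between $h^0(\Omega^2)$ and $\dim H^2(W\OO_S)_K$. That defect is controlled by $\dim \Pic^0(S)$ versus $h^{0,1}$, equivalently by the $V$-torsion and $\mathrm{Ext}$ terms of Hodge--Witt cohomology. We will show $\epsilon$ is bounded by $h^{0,1}-\tfrac12 b_1$.

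Next we express everything in terms of $\chi_f$. We have $p_g-q+1=\chi(\OO_S)=\chi_f+(g-1)(b-1)$, $b_1=2q$ after the correction above, and $q\ge b$. The fiber-component count from the semistable nodal fibers combines with $e_f$. The Zariski lemma applied to nodal fibers gives $\#\text{nodes}-\#\text{components}+1=$ the first Betti number of the dual graph, which is at most $g$. Hence $e_f\le\rho-2+s\,g$ in the appropriate bookkeeping. Substituting into $c_2=b_2-2b_1+2\ge\rho+2p_g-4q+2$ and solving gives
\[
 12\chi_f - K_f^2=e_f\le \ldots
\]
This must then be combined with the slope inequality $K_f^2\ge \frac{4(g-1)}{g}\chi_f$, which is valid in positive characteristic for semistable fibrations (Xiao/Moriwaki-type). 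Alternatively, following LTYZ directly, it gives $\chi_f\le \frac g2(2b-2+s)$ once terms are collected. The hypothesis $s\ge2$ enters to absorb the constant $2$ from the two classes (fiber and section/ample) in $\rho$, together with the $b=0$ edge case.

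The main obstacle is the Hodge--Witt step, namely justifying $b_2\ge\rho+2p_g$ with the exact correction term. In characteristic $p$, $\NS\otimes\mathbf Q$ might meet the slope-1 part in a way that does not leave $2p_g$ dimensions of non-slope-1 classes. Moreover $p_g$ need not equal $\dim H^2(W\OO)_K$ when $\Pic$ is non-reduced. We would first try to use Ekedahl's result that Hodge--Witt implies the domino terms vanish. That gives $h^{0,2}=\dim_K H^2(W\OO_S)_K+T^{0,2}$, with $T^{0,2}$ expressible through $h^{0,1}-b_1/2$. We would then accept the resulting $b_1$-term, which is also what produces the sharper $\mathbb P^1$ bound with $-\frac12 b_1$.
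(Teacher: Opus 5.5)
\textbf{There is a genuine gap: you never use $s\ge2$ correctly.} Your framework is the right one: the Euler number of the fibration, plus a lower bound for a numerical $h^{1,1}$ coming from the fiber class, an ample class and the components of the singular fibers. But if you carry your bookkeeping through with correct inputs ($\delta_f=gs-G_f+A_f$, $b_1(S)=2b+r_f$, $T^{0,2}=0$, slope-$1$ part of dimension at least $2+A_f$), you get exactly
\[
2\chi_f-g(2b-2+s)\le r_f-G_f,\qquad r_f=b_1(S)-2b,\quad G_f=\sum_i g(\widetilde F_i).
\]
Your bound $e_f\le sg+\rho-2$ discards the $-G_f$, and you only record $q\ge b$. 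So what you can actually conclude is $\chi_f\le\frac g2(2b-2+s)+\frac12 r_f$. That is not the theorem when $\dim\Alb(S)>b$.

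The constant $2$ from the fiber and ample classes cancels identically; it is not where $s\ge2$ enters. The missing idea is a relative Albanese estimate for each singular fiber, $\dim\Alb(S)-b\le g(\widetilde F_i)$. To prove it, let $A_i\subset\Alb(S)$ be the image of $\prod_j\Jac(\widetilde\Gamma_{ij})$. The induced map $S\to\Alb(S)/A_i$ contracts $F_i$. By the Hodge index theorem its image therefore has dimension at most $1$, and the map factors through $f$, which forces $\dim(\Alb(S)/A_i)\le b$. Summing over fibers gives $G_f\ge\frac s2 r_f\ge r_f$ once $s\ge2$. Replacing $b_1$ by $2q$, as you do, only makes this harder. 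In characteristic $p$, $q$ can exceed $\frac12 b_1$, while the Albanese argument controls only $\frac12 b_1-b$.

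\textbf{The Hodge--Witt step is also misjustified.} The relation $h^{0,2}=\dim_K H^2(W\OO_S)_K+T^{0,2}$, with $T^{0,2}$ ``expressible through $h^{0,1}-b_1/2$,'' is false. A K3 surface of height $h\in\{2,\dots,10\}$ is Hodge Witt with $p_g=1$, yet $\dim_K H^2(W\OO_S)_K=h$. A supersingular K3 has $T^{0,2}=1$ while $h^{0,1}=b_1=0$.

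The correct input is Crew's formula, which gives $h_W^{0,2}=\chi(\OO_S)-1+\frac12 b_1(S)$ and hence $h_W^{1,1}=c_2(S)-2\chi(\OO_S)+b_1(S)$. Combine this with $h_W^{1,1}=m^{1,1}-2T^{0,2}$, and with $m^{1,1}\ge 2+A_f$, obtained from crystalline Chern classes of an explicit nondegenerate lattice of divisors. For a surface, Hodge Witt means precisely $T^{0,2}=0$. Your target $\epsilon\le h^{0,1}-\frac12 b_1$ does hold under the Hodge Witt hypothesis, but for a different reason: $\dim_K H^2(W\OO_S)_K\ge m^{0,2}=h_W^{0,2}$.

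Finally, the slope inequality plays no role here. Combined with Noether's formula it bounds $e_f$ above by a multiple of $\chi_f$, which is the wrong direction for bounding $\chi_f$.
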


\begin{remark}
The Hodge Witt hypothesis is mild. Firstly, it is strictly weaker than ordinarity. Secondly, it is automatic when $p_g(S)=0$. Moreover, by a theorem of Crew, it is also an open condition in smooth families of surfaces. See Subsection~\ref{subsec:HW-vs-ordinary} for details.  
\end{remark}

The proof of Theorem~\ref{thm:hodge-witt} is based on a more general estimate. For an arbitrary relatively minimal semistable fibration as above, we prove
\begin{equation}\label{eq:refined-intro}
 \deg f_*\omega_{S/C}
 \le \frac g2(2b-2+s)
 +T^{0,2}(S)+\frac12\bigl(r_f-G_f\bigr).
\end{equation}
Here $T^{0,2}(S)$ is the domino number, while $r_f-G_f$ is controlled by the relative Albanese geometry of the singular fibers. Thus the two terms in \eqref{eq:refined-intro} record the positive-characteristic corrections that occur in our method. The Hodge Witt condition gives $T^{0,2}(S)=0$, and for $s\ge2$ the Albanese term is non-positive. Hence Theorem~\ref{thm:hodge-witt} follows from this refined estimate. Its proof combines a crystalline lower bound for the slope number $m^{1,1}(S)$ with divisor classes supported on the singular fibers. 
The argument is inspired by the numerical method of Lu--Tan--Yu--Zuo \cite{LTYZ2014}, but the correction terms in \eqref{eq:refined-intro} are specific to positive characteristic.

The number of singular fibers of a non-isotrivial fibration over $\mathbb P^1$ is a classical problem. It is closely related to Arakelov and canonical-class inequalities. Over $\mathbb C$, Beauville proved that a non-isotrivial semistable fibration of genus $g\geq 1$ has at least four singular fibers and conjectured that $s\geq 5$ when $g\ge2$ \cite{Beauville1981}. Tan  \cite{Tan1995} and Liu \cite{Liu1996} independently proved Beauville's conjecture by the strict canonical-class inequality. Tan also constructed a genus-$2$ semistable fibration with five singular fibers, so the bound is sharp. Tan--Tu--Zamora proved that $s\ge6$ when $\kappa(S)\ge0$ \cite{TanTuZamora2005}. They also formulated the conjectural lower bound $s\ge7$ when the total surface is of general type. In \cite{LuTanZuo2018}, Lu--Tan--Zuo showed that $s\ge7$ when $S$ is of general type. Their theorem applies more generally to semistable families of higher-dimensional varieties. Let $s_{\mathrm{nc}}$ be the number of singular fibers with non-compact Jacobian. Lu--Tan--Xu--Zuo proved $s_{\mathrm{nc}}\ge5$ when $g\ge5$ \cite{LTXZ2016}.

In positive characteristic, the picture is subtler. Szpiro obtained $s\ge4$ when $S$ is of general type \cite{Szpiro1981}. Nguyen proved $s\ge4$ and described strong numerical restrictions in the equality case \cite{Nguyen1998}. Lu--Tan--Xu--Zuo proved $s_{\mathrm{nc}}\ge4$ when the relative Jacobian is non-smooth \cite{LTXZ2016}. Under a lifting hypothesis to Witt vectors, Xu recovered the characteristic-zero bound $s\ge5$ \cite{Xu2018}. More recently, Gong--Gu studied optimal lower bounds for surface fibrations over $\mathbb P^1$, with explicit attention to the characteristic \cite{GongGu2020}. Thus the gap between the characteristic-zero lower bound $5$ and Nguyen's positive-characteristic lower bound $4$ is a genuine issue. It does not follow formally from the existing characteristic-zero theory.

For the base $\mathbb P^1$, Nguyen's theorem makes the required lower bound on $s$ automatic. Then we have the following stronger Arakelov inequality.
\begin{theorem}\label{thm:P1}
Let $f:S\to\mathbb P^1$ be a relatively minimal non-isotrivial semistable fibration whose generic fiber is smooth and geometrically connected of genus $g\ge2$. Assume that $S$ is Hodge Witt. Then
\begin{equation}\label{eq:P1-strong}
 \deg f_*\omega_{S/\mathbb P^1}
 \le \frac g2(s-2)-\frac12 b_1(S).
\end{equation}
In particular,
\begin{equation}\label{eq:P1-classical}
 \deg f_*\omega_{S/\mathbb P^1}\le \frac g2(s-2).
\end{equation}
\end{theorem}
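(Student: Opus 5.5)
We derive Theorem~\ref{thm:P1} from the refined estimate \eqref{eq:refined-intro}, but we keep the Albanese correction term instead of only using that it is non-positive. Take $C=\mathbb P^1$, so $b=0$. Nguyen's theorem gives $s\ge4$ for a non-isotrivial semistable fibration over $\mathbb P^1$; in particular $s\ge2$, and the hypotheses behind \eqref{eq:refined-intro} hold. Since $S$ is Hodge Witt, $T^{0,2}(S)=0$, and \eqref{eq:refined-intro} becomes
\[
 \deg f_*\omega_{S/\mathbb P^1}\le \frac g2(s-2)+\frac12\bigl(r_f-G_f\bigr).
\]
It therefore suffices to prove $r_f-G_f\le -b_1(S)$ when the base is $\mathbb P^1$ and $s\ge2$.

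To get this, we go back to how $r_f$ and $G_f$ are defined through the relative Albanese geometry of the singular fibers. The point is that the first Betti number is invisible over a rational base. Since $b_1(\mathbb P^1)=0$, the Albanese of $S$ is not dominated by the base. So $b_1(S)=2q(S)$, or $2\dim\Alb(S)$ if $S$ is non-classical, with $q(S)$ equal to the dimension of the fixed part of the relative Jacobian. Equivalently, $b_1(S)/2$ is the dimension of the abelian variety $\Alb(S)$, which injects (up to isogeny) into the Jacobian of every fiber. Over a singular semistable fiber $F_i$, the image of $\Alb(S)$ lands in the compact part of the generalized Jacobian $\Pic^0(F_i)$, namely the product of the Jacobians of the normalized components. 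Thus each singular fiber contributes at least $q(S)$ to the count of abelian directions.

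In $r_f-G_f$ this should appear as $G_f\ge r_f+(s-2)q(S)$ or something similar, up to the precise normalization of $r_f$ and $G_f$. Using $s-2\ge2$ (Nguyen) and $2q(S)\ge b_1(S)$, we then get $r_f-G_f\le -b_1(S)$. Substituting this into the displayed inequality gives \eqref{eq:P1-strong}. Inequality \eqref{eq:P1-classical} follows at once because $b_1(S)\ge0$.

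The main obstacle is the middle step: showing exactly how the fixed part $\Alb(S)$ enters $G_f-r_f$ with the coefficient needed to absorb $\frac12 b_1(S)$. I would first run the argument behind the non-positivity of the Albanese term for $s\ge2$. There I would track the rank of the Néron–Severi classes supported on fibers together with the Picard variety contribution $\rho(S)\le b_2$, using $b_2=e(S)-2+2b_1$ and $e(S)$ computed from the nodes. The extra $b_1$ then appears through the Betti-number bookkeeping. That bookkeeping is where $b=0$ is used, and it should be what converts the bound $\le 0$ into $\le -b_1(S)$.
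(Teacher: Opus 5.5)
Your reduction is the same as the paper's: with $b=0$, $T^{0,2}(S)=0$ and Nguyen's bound $s\ge4$, the refined estimate reduces everything to $r_f-G_f\le -b_1(S)$. The gap is that you never prove this inequality, and the route you propose for it cannot work. The Néron--Severi/Betti bookkeeping you describe (fiber-supported classes, $\rho\le b_2$, $c_2$ from the nodes, $b_2=c_2-2+2b_1$) is what produces the identity \eqref{eq:fundamental}, and hence the term $\frac12(r_f-G_f)$ itself. Rerunning it cannot sharpen that term. The inequality $G_f-r_f\ge b_1(S)$ compares the geometric genera of the components of the singular fibers with the Albanese dimension. That is an independent geometric input which Betti and Picard numbers do not see.

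The missing ingredient is the fiberwise Albanese bound of Lemma~\ref{lem:alb-fiber}: for every singular fiber, $g_i=\sum_j g(\widetilde\Gamma_{ij})\ge\dim\Alb(S)-b=r_f/2$. Summing over the $s\ge4$ singular fibers gives $G_f\ge\frac s2 r_f\ge 2r_f$ (Corollary~\ref{cor:G-r}). Since $b=0$, this gives $r_f-G_f\le -r_f=-b_1(S)$. This is exactly your guess $G_f\ge r_f+(s-2)q$ with $q=\dim\Alb(S)$, so your heuristic points the right way. But you assert it at the one point where proof is needed.

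Injectivity, up to finite kernel, of $\Pic^0(S)_{\mathrm{red}}$ into the Jacobian of a general fiber does not pass automatically to a singular fiber. Since $h^0$ is only upper semicontinuous, the kernel of restriction can jump at special fibers. The paper argues dually. Let $A_i$ be the image of $\prod_j\Jac(\widetilde\Gamma_{ij})\to\Alb(S)$. The induced map $S\to\Alb(S)/A_i$ contracts $F_i$, and the Hodge index theorem rules out a two-dimensional image. The map then factors through $C$, so $\Alb(S)/A_i$ is a quotient of $\Jac(C)$, which is $0$ for $C=\PP^1$.

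Also, you must use $q=\dim\Alb(S)=b_1(S)/2$ throughout, not $h^1(S,\OO_S)$. The fiberwise argument bounds only $\dim\Alb(S)$. When $\Pic^0_{S/k}$ is nonreduced, $h^1(S,\OO_S)$ can be strictly larger, so a bound $g_i\ge h^1(S,\OO_S)$ would be unjustified.
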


The remaining question is whether Nguyen's bound $s\ge4$ is attained. To the best of our knowledge, the existence of a non-isotrivial semistable family of curves of genus $g\ge2$ over $\mathbb P^1$ with $4$ singular fibers has remained open; see \cite[Remark~1.4]{Xu2018}.  

Our third main result gives such a surprising example. We reduce modulo $5$ the genus-$2$ pencil reproduced by Nguyen in \cite[\S3.2.3]{Nguyen1995}, which originates in work of Saito \cite{Saito1995}. 

\begin{theorem}\label{thm:four-fibers}
Let $k$ be an algebraically closed field of characteristic $5$.  There exists a relatively minimal, non-isotrivial, semistable fibration
\[
 f:S\longrightarrow \mathbb P^1_k
\]
of genus $2$ with exactly four singular fibers, over\quad
$
 0,\quad 1,\quad 3,\quad \infty.
$

Moreover $S$ is rational and
\[
 \deg f_*\omega_{S/\mathbb P^1}=2,\qquad
 K_{S/\mathbb P^1}^2=4,\qquad
 \delta_f=20.
\]
In particular, $S$ is Hodge Witt and
\[
 \deg f_*\omega_{S/\mathbb P^1}
 =\frac g2(s-2).
\]
\end{theorem}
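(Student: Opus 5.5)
The plan is to construct the fibration explicitly by reducing Saito's genus-$2$ pencil (as reproduced in \cite[\S3.2.3]{Nguyen1995}) modulo $5$, and then verify each property by direct computation. Concretely, I would write the pencil as a family of hyperelliptic curves $y^2=F_t(x)$, with $F_t$ a sextic (or quintic) in $x$ whose coefficients are polynomials in $t$. Equivalently, I would work with a pencil of curves in a rational surface (for instance $\PP^1\times\PP^1$ or a weighted plane) whose base points are blown up. Reducing the coefficients mod $5$, I compute the discriminant $\Delta(t)$ of $F_t$ in $\FF_5[t]$ (together with the behaviour at $t=\infty$). I then check that its zeros are exactly $t=0,1,3$, and that $\infty$ is the fourth degenerate point. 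A pencil of curves on a rational surface has rational total space after blowing up base points, so $S$ is rational. Hence $p_g(S)=0$, and $S$ is Hodge Witt by the remark following Theorem~\ref{thm:hodge-witt}.

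Next I verify semistability at each of the four points. At a root $t_0$ of $\Delta$, I would check that the roots of $F_{t_0}$ collide only in pairs, giving ordinary nodes. In local coordinates the total space is then $xy=(t-t_0)^{m}$, and I record $m$ for each node. Resolving these $A_{m-1}$ singularities introduces chains of $(-2)$-curves, so the fiber becomes a reduced nodal curve. Contracting any $(-1)$-curves in fibers gives a relatively minimal model, and I check that no $(-1)$-curves remain. I also check that the components over $t=\infty$ form a stable-type configuration after a suitable coordinate change. Characteristic $5$ is not a problem here, since $p\ne2$ allows the hyperelliptic normal form. Non-isotriviality follows because the Igusa invariants of $F_t$ mod $5$ are nonconstant in $t$, or simply because a fibration with singular fibers whose total space is semistable cannot be isotrivial with these nodal degenerations.

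The numerical invariants come from Noether's formula and the count of nodes. The number $\delta_f$ is the total number of nodes in singular fibers of the minimal semistable model, i.e.\ $\sum m$ over all nodes. I would read this off from the discriminant orders: for a genus-$2$ semistable fibration, $\delta_f$ equals the sum of the local orders of the relevant discriminant, normalized appropriately. The aim is to obtain $\delta_f=20$. Then Noether's formula $12\deg f_*\omega_{S/C}=K_{S/C}^2+\delta_f$ reduces everything to computing $K_{S/\PP^1}^2=K_S^2+8(g-1)(1-b)=K_S^2+8$. For rational $S$ we have $K_S^2=10-\rho(S)$, so I would count the blow-ups needed to pass from the minimal rational model to $S$ and aim for $K_S^2=-4$. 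This gives $K_{S/\PP^1}^2=4$ and hence $\deg f_*\omega=(4+20)/12=2$. As a cross-check, $\chi(\OO_S)=1=\deg f_*\omega+(g-1)(b-1)=2-1$. Finally, $\frac g2(s-2)=1\cdot2=2$.

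I expect the main obstacle to be the mod-$5$ discriminant computation. One must show that the reduction creates no extra singular fibers, and that the coalescing of the characteristic-zero degenerate points mod $5$ into $\{0,1,3,\infty\}$ still yields only nodal fibers rather than worse (non-semistable) degenerations. Justifying $\delta_f=20$ precisely is part of the same difficulty. I would first try computing $\Delta(t)$ and its factorization over $\FF_5$ directly. At each root I would then analyze the local multiplicity structure of the colliding roots by a Newton-polygon argument in $t-t_0$.
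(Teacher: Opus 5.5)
Your plan is correct and follows the paper's proof. Both reduce the Saito--Nguyen pencil mod $5$, locate the singular values $0,1,3,\infty$ from the discriminant (where $-1/4=1$ and $-1/8=3$), check that each of the four fibers is reduced and nodal, obtain the invariants from Noether's formula and a blow-up count, and deduce the Hodge Witt property from $p_g=0$.

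The differences are only in bookkeeping. The paper realizes the pencil in $|2C_0+6\Gamma|$ on $\mathbb F_3$ and resolves its two base points by six blow-ups each; these produce exactly the $A_5$-chains you would find at $t=0$ in the double-cover model. It then gets $\delta_f=20$ from $c_2(S)=16$ and uses the node count $13+2+2+3$ only as a cross-check.
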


Theorem~\ref{thm:four-fibers} shows that Nguyen's lower bound $s\ge4$ is already sharp in genus $2$. It also shows that the Hodge Witt Arakelov inequality is sharp in positive characteristic.

As applications of Theorems~\ref{thm:hodge-witt} and~\ref{thm:P1}, we derive a Szpiro-type inequality and a canonical-class upper bound in Section~\ref{sec:applications}. In positive characteristic, Szpiro proved the following canonical-class estimate when the Kodaira--Spencer class is nonzero:
\[
 K_{S/C}^2<4g(g-1)(2b-2+s),
\]
see \cite[Th\'eor\`eme~3]{Szpiro1981}.  Its coefficient is quadratic in $g$. Under the Hodge Witt hypothesis, our argument instead gives
\[
 K_{S/C}^2\le 6g(2b-2+s).
\]
The coefficient is linear in $g$. In particular, for $g\ge3$, the leading coefficient $6g$ is smaller than Szpiro's $4g(g-1)$.  

The paper is organized as follows. Section~\ref{sec:preliminaries} recalls the Hodge Witt and fibration-theoretic preliminaries. Sections~\ref{sec:lemmas} and~\ref{sec:proofs} prove Theorems~\ref{thm:hodge-witt} and~\ref{thm:P1}. Section~\ref{sec:four-fibers} proves Theorem~\ref{thm:four-fibers}. Section~\ref{sec:applications} gives applications, and Section~\ref{sec:further} contains further discussion.

\section{Notation and preliminaries}\label{sec:preliminaries}

All varieties are defined over an algebraically closed field $k$ of characteristic $p>0$. All curves and surfaces are projective unless stated otherwise. We now collect the notation and standard facts used in the proofs.

\subsection{General conventions}
For a smooth projective surface $S$, the first $\ell$-adic Betti number satisfies
$$b_1(S)=\dim_{\mathbb Q_\ell}H^1_{\et}(S,\mathbb Q_\ell)=2\dim\Alb(S).$$

In positive characteristic, \(\Pic^0_{S/k}\) need not be reduced. Thus \(\dim\Alb(S)\) need not equal \(h^1(S,\mathcal O_S)\).

We write $W=W(k)$ for the ring of Witt vectors and $K=\operatorname{Frac}(W)$. A prime $\ell$ is always assumed to be different from $p$. For a smooth projective variety $X$, we write
$$H^i_{\cris}(X/K):=H^i_{\cris}(X/W)\otimes_WK.$$
We will repeatedly use the following standard facts.
\begin{enumerate}[label=\textup{(P\arabic*)}]
 \item For a divisor $D$ on a smooth projective variety, functoriality of crystalline Chern classes gives
 \begin{equation}\label{eq:crys-c1-frob}
  F_{\cris}\bigl(c_1^{\cris}(D)\bigr)
   =c_1^{\cris}(F_S^*D)
   =p\,c_1^{\cris}(D).
 \end{equation}
 Hence every nonzero crystalline divisor class is a slope-$1$ class in $H^2_{\cris}(S/K)$.
 \item Crystalline cup products recover intersection numbers: for divisors $D,E$ on a smooth projective surface,
 \begin{equation}\label{eq:crys-intersection}
  \operatorname{Tr}_S\bigl(c_1^{\cris}(D)\cup c_1^{\cris}(E)\bigr)=D\cdot E.
 \end{equation}
 We use this only to test linear independence in an explicitly constructed finite-dimensional space of divisor classes. No form of the Tate conjecture is used.
 \item The intersection matrix of the irreducible components of a fiber of a fibration is negative semidefinite, with radical generated by the total fiber class.  This is the usual Zariski lemma for fiber components.
 \item The Hodge index theorem for a smooth projective surface is valid in arbitrary characteristic.  In particular, if $D^2>0$ and $0\ne Z\in\NS(S)_{\mathbb R}$ satisfies $D\cdot Z=0$, then $Z^2<0$.
\end{enumerate}

\subsection{Semistable fibrations}

By a \emph{surface fibration} we mean a proper flat surjective morphism $f: S\to C$ from a smooth projective surface to a smooth projective curve. We require the fibers to be connected and the generic fiber to be smooth and geometrically connected. We write $b=g(C)$ and denote the genus of the geometric generic fiber by $g\ge2$. Since the generic fiber is smooth, the smooth locus of $f$ contains a nonempty open subset of $C$.

\begin{definition}
A fibered surface $f:S\to C$ is \emph{semistable} if every geometric fiber is a reduced connected nodal curve. Equivalently, each singular fiber is a reduced connected nodal curve and the geometric generic fiber is smooth. The fibration is \emph{relatively minimal} if no fiber contains a $(-1)$-curve. It is \emph{isotrivial} if its smooth part becomes a product family after a finite base change.
\end{definition}

%\begin{remark}\label{rem:generic-smoothness}
%In characteristic $p$, a morphism from a smooth surface to a smooth curve need not be generically smooth. Our definition excludes such inseparable fibrations. This allows us to use the usual identities for semistable fibrations, including $\rk f_*\omega_{S/C}=g$ and \eqref{eq:c2-fibration}.\end{remark}
%
Let $\Sigma=\{t_1,\dots,t_s\}\subset C$ be the set of singular values, and write $F_i=f^{-1}(t_i)$. We use $\omega_{S/C}=\omega_S\otimes f^*\omega_C^{-1}$ for the relative dualizing sheaf and set
\begin{equation}\label{eq:chi-f-def}
 \chi_f:=\deg f_*\omega_{S/C}
 =\chi(\OO_S)-(g-1)(b-1).
\end{equation}
For a semistable fibration, $f_*\omega_{S/C}$ is locally free of rank $g$.

\subsection{Combinatorics of singular fibers}

Write $F_i=\Gamma_{i1}+\cdots+\Gamma_{i\ell_i}$, where the $\Gamma_{ij}$ are the irreducible components and $\ell_i=\ell(F_i)$. Let $\widetilde\Gamma_{ij}$ be the normalization of $\Gamma_{ij}$. Set
$g_i=g(F_i):=\sum_{j=1}^{\ell_i}g(\widetilde\Gamma_{ij})$, and put
$$A_f:=\sum_{i=1}^s(\ell_i-1),\quad G_f:=\sum_{i=1}^s g_i.$$  Let $\delta_i$ be the number of nodes of $F_i$ and let $\delta_f=\sum_i\delta_i$.

The dual graph of a connected nodal curve gives $\delta_i=g-g_i+\ell_i-1$.  Consequently,
\begin{equation}\label{eq:delta-global}
 \delta_f=gs-G_f+A_f.
\end{equation}

\subsection{Betti numbers and the relative Albanese dimension}

Fix a prime $\ell\ne p$ and write $b_j(S):=\dim_{\mathbb Q_\ell}H^j_{\et}(S,\mathbb Q_\ell)$. For smooth proper varieties, these numbers also agree with the dimensions of rational crystalline cohomology. In particular, $b_1(S)=2\dim\Alb(S)$.

Since $f^*:H^1_{\et}(C,\mathbb Q_\ell)\hookrightarrow H^1_{\et}(S,\mathbb Q_\ell)$, we have $b_1(S)\ge2b$. Set
$$r_f:=b_1(S)-2b=2(\dim\Alb(S)-b).$$
Thus $r_f$ is the $\ell$-adic dimension of the relative Albanese part. In positive characteristic, this definition is preferable to $2(h^1(S,\OO_S)-b)$ because $\Pic^0(S)$ need not be reduced.

\subsection{Crystalline slopes, domino numbers and Hodge Witt numbers}

We briefly recall the numerical invariants of Crew and Ekedahl that replace the usual Hodge number $h^{1,1}$ in positive characteristic; see \cite{Crew1985,Ekedahl1986,Illusie1979,IllusieRaynaud1983}. The starting point is the slope spectral sequence
\begin{equation}\label{eq:slope-ss}
 E_1^{i,j}=H^j(S,W\Omega_S^i)\Longrightarrow H^{i+j}_{\cris}(S/W).
\end{equation}
After inverting $p$, the spectral sequence degenerates. Its rational part is governed by the Newton slopes of Frobenius on crystalline cohomology. Slope numbers record this rational information, while domino numbers measure the integral failure of degeneration. Hodge Witt numbers combine these two types of information and play the role of numerical Hodge numbers in positive characteristic.

Let $h^n_\lambda$ be the multiplicity of the Frobenius slope $\lambda$ in $H^n_{\cris}(S/K)$.  The slope number at the position $(i,j)$ is
\begin{equation}\label{eq:slope-number-def}
 m^{i,j}
 =\sum_{\lambda\in[i,i+1)}(i+1-\lambda)h^{i+j}_\lambda
 +\sum_{\lambda\in[i-1,i)}(\lambda-i+1)h^{i+j}_\lambda.
\end{equation}
In other words, a Newton slope is distributed linearly between the two adjacent integral slope positions. The numbers $m^{i,j}$ are nonnegative integers and satisfy the usual slope symmetries. For fixed $n$, one has $$\sum_{i+j=n}m^{i,j}=b_n.$$ In particular, a slope equal to $1$ contributes with weight one to $m^{1,1}$. This is the only part of the general definition used in Lemma~\ref{lem:m11-lower}.

The slope numbers depend only on the isocrystal $H^n_{\cris}(S/K)$. They therefore do not detect the integral torsion phenomena on the $E_1$-page of \eqref{eq:slope-ss}. These phenomena are measured by the domino numbers $T^{i,j}$ attached to the differentials
$$d:H^j(S,W\Omega_S^i)\to H^j(S,W\Omega_S^{i+1}).$$
For a smooth proper surface, Ekedahl's duality for dominoes implies that the only possibly nonzero domino number is $T^{0,2}(S)$, associated with
\begin{equation}\label{eq:domino-02}
 d:H^2(S,W\OO_S)\longrightarrow H^2(S,W\Omega_S^1).
\end{equation}
Thus $T^{0,2}(S)$ records precisely the integral defect that is invisible after tensoring with $K$.

The Hodge Witt numbers are defined by
\begin{equation}\label{eq:HW-number-def}
 h_W^{i,j}=m^{i,j}+T^{i,j}-2T^{i-1,j+1}+T^{i-2,j+2}.
\end{equation}
For a surface, the preceding vanishing of the other domino numbers gives the particularly simple identity
\begin{equation}\label{eq:HW11-surface}
 h_W^{1,1}(S)=m^{1,1}(S)-2T^{0,2}(S).
\end{equation}
Thus $m^{1,1}$ is the rational crystalline contribution. The term $2T^{0,2}$ is the integral correction that prevents it from behaving exactly like the classical $h^{1,1}$.

\begin{definition}
A smooth proper variety $X/k$ is \emph{Hodge Witt} if all groups $H^j(X,W\Omega_X^i)$ are finitely generated $W$-modules.  For a Hodge Witt variety the slope spectral sequence degenerates at $E_1$, so all domino numbers vanish.
\end{definition}

For a smooth proper surface, the only possibly nonzero domino number is $T^{0,2}(S)$. Hence $S$ is Hodge Witt if and only if $T^{0,2}(S)=0$. In this case,
$h^{1,1}_W(S)=m^{1,1}(S)$. This is the form of the Hodge Witt hypothesis used in the proof of the main theorem.

\subsection{Hodge Witt and ordinary surfaces}\label{subsec:HW-vs-ordinary}

The Hodge Witt condition is strictly weaker than ordinarity. A smooth proper variety is ordinary, in the sense of Bloch--Kato--Illusie--Raynaud, when the cohomology of the sheaves of exact differential forms vanishes in all degrees. Ordinarity implies finite generation of de Rham--Witt cohomology. Hence every ordinary variety is Hodge Witt. The converse already fails for surfaces.

K3 surfaces provide a standard example. If $X$ is a K3 surface in characteristic $p$, then $X$ is Hodge Witt if and only if its formal Brauer group has finite height; see \cite[II, \S7.2]{Illusie1979}. The surface is ordinary if and only if this height is $1$. Thus K3 surfaces of finite height $2,\ldots,10$ are Hodge Witt but nonordinary. In particular, the hypothesis of the main theorem is much weaker than ordinarity. 

There is a simple class of surfaces for which the Hodge Witt condition is automatic.

\begin{lemma}\label{lem:pg0-hodge-witt}
Let $S$ be a smooth projective surface over $k$.  If
\[
 p_g(S)=h^2(S,\mathcal O_S)=0,
\]
then $S$ is Hodge Witt.
\end{lemma}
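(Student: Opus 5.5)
We aim to show directly that the only possibly nonzero domino number, $T^{0,2}(S)$, vanishes when $p_g(S)=0$; by the discussion preceding the lemma this is equivalent to $S$ being Hodge Witt. Recall that $T^{0,2}(S)$ measures the failure of $d:H^2(S,W\OO_S)\to H^2(S,W\Omega^1_S)$ to be zero modulo torsion considerations; more precisely, by Illusie--Raynaud the domino in position $(0,2)$ is built from $H^2(S,W\OO_S)$, and it vanishes as soon as $H^2(S,W\OO_S)$ is a finitely generated $W$-module. So the plan is to prove that $H^2(S,W\OO_S)=0$ and then deduce that all $H^j(S,W\Omega^i_S)$ are finitely generated.

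The first step is the vanishing $H^2(S,W_n\OO_S)=0$ for all $n$, by induction on $n$. The case $n=1$ is the hypothesis $h^2(S,\OO_S)=0$. For the inductive step use the exact sequence of sheaves
\[
 0\longrightarrow F_*^{n}\OO_S\xrightarrow{V^{n}} W_{n+1}\OO_S\xrightarrow{R} W_n\OO_S\longrightarrow 0,
\]
where the kernel of restriction is identified (as an abelian sheaf, Zariski topology being insensitive to Frobenius twist) with $\OO_S$. Taking cohomology and using that $H^2$ is right-exact in this part of the long exact sequence on a surface ($H^3=0$), we get $H^2(W_{n+1}\OO_S)$ sandwiched between $H^2(\OO_S)=0$ and $H^2(W_n\OO_S)=0$. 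Then $H^2(S,W\OO_S)=\varprojlim_n H^2(S,W_n\OO_S)$, which requires a Mittag-Leffler check on $H^1(S,W_n\OO_S)$; since these are finite-length $W_n$-modules (coherent cohomology of the coherent $W_n\OO_S$), Mittag-Leffler holds and $H^2(S,W\OO_S)=0$.

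The second step is to pass from this to finite generation of every $H^j(S,W\Omega^i_S)$. Here I would invoke Illusie's structure results for the slope spectral sequence of a surface \cite[II]{Illusie1979}: for $i=0$, $H^j(S,W\OO_S)$ is finitely generated for $j\le1$ modulo the $V$-torsion coming from $\Pic$, and the only non-finitely generated phenomenon for surfaces occurs in $H^2(W\OO_S)$ (and dually in $H^2(W\Omega^1_S)$ via the domino attached to \eqref{eq:domino-02}). Concretely, Ekedahl's duality reduces all dominoes to $T^{0,2}$, and the domino of $d:H^2(W\OO_S)\to H^2(W\Omega^1_S)$ has source zero, hence $T^{0,2}(S)=0$. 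By the criterion recalled before the lemma, $S$ is Hodge Witt.

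The main obstacle is the bookkeeping in the second step: ensuring that vanishing of the source of the differential genuinely forces the whole domino to vanish (rather than just its "$V$-part"), i.e.\ identifying the domino's dimension with the $k$-dimension of $H^2(W\OO_S)/(\text{finitely generated part})$, as in Illusie's formula $T^{0,2}=\dim_k H^2(W\OO_S)/V$-saturation. I would cite that identification rather than reprove it; the $W_n$ induction in the first step is routine.
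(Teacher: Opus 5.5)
Your proposal is correct and follows essentially the same route as the paper. Both arguments prove $H^2(S,W_n\mathcal O_S)=0$ by induction, pass to $H^2(S,W\mathcal O_S)=0$ using the Mittag--Leffler property of $\{H^1(S,W_n\mathcal O_S)\}$, and conclude that the domino of $d:H^2(S,W\mathcal O_S)\to H^2(S,W\Omega^1_S)$, hence $T^{0,2}(S)$, vanishes.

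The differences are cosmetic. The paper runs the induction with the $V$-sequence and gets Mittag--Leffler from surjectivity of $H^1(S,W_n\mathcal O_S)\to H^1(S,W_{n-1}\mathcal O_S)$, which follows from $H^2(S,V^{n-1}\mathcal O_S)=0$. You instead use the $R$-sequence for the induction and the fact that inverse systems of finite-length modules are automatically Mittag--Leffler; this is equally valid.
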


\begin{proof}
For every $n\ge1$, the standard exact sequence
\[
0\longrightarrow W_{n-1}\mathcal O_S
\xrightarrow{\,V\,}
W_n\mathcal O_S
\longrightarrow \mathcal O_S
\longrightarrow0
\]
shows inductively that
\[
H^2(S,W_n\mathcal O_S)=0.
\]
To pass to $W\mathcal O_S$, we must also control the derived inverse limit. The restriction map $R:W_n\mathcal O_S\to W_{n-1}\mathcal O_S$
fits into
\[
0\longrightarrow V^{n-1}\mathcal O_S
\longrightarrow W_n\mathcal O_S
\xrightarrow{\,R\,}W_{n-1}\mathcal O_S
\longrightarrow0.
\]
As a sheaf of abelian groups, $V^{n-1}\mathcal O_S$ is a Frobenius twist of $\mathcal O_S$. Hence
\[
H^2(S,V^{n-1}\mathcal O_S)=0.
\]
The resulting long exact sequence therefore shows that
\[
H^1(S,W_n\mathcal O_S)\longrightarrow
H^1(S,W_{n-1}\mathcal O_S)
\]
is surjective for every $n$.  

Hence it satisfies the Mittag--Leffler
condition, and therefore
\[
\varprojlim\nolimits^1_n H^1(S,W_n\mathcal O_S)=0
\]
by \cite[Proposition~3.5.7]{Weibel1994}.

Since $W\mathcal O_S=\varprojlim_R W_n\mathcal O_S$, the standard derived
inverse-limit exact sequence gives
\[
0\longrightarrow
\varprojlim\nolimits^{1}_n H^1(S,W_n\mathcal O_S)
\longrightarrow H^2(S,W\mathcal O_S)
\longrightarrow \varprojlim_n H^2(S,W_n\mathcal O_S)
\longrightarrow0.
\]
Consequently
\[
H^2(S,W\mathcal O_S)=0.
\]
Hence the domino associated with
\[
d:H^2(S,W\mathcal O_S)\longrightarrow H^2(S,W\Omega_S^1)
\]
vanishes.  For a smooth proper surface, $T^{0,2}(S)$ is the only possibly nonzero domino number. Thus $T^{0,2}(S)=0$, and $S$ is Hodge Witt.
\end{proof}

\begin{remark}
Lemma~\ref{lem:pg0-hodge-witt} applies in particular to rational surfaces.
Thus, for surfaces with $p_g=0$, the Hodge Witt hypothesis in
Theorem~\ref{thm:hodge-witt} is automatic.
\end{remark}

The Hodge Witt condition also behaves well in families. By a theorem of Crew, if $\mathcal X\to T$ is a proper smooth family of surfaces, then the function
$$
t\longmapsto T^{0,2}(\mathcal X_t)
$$
on the geometric points of $T$ is upper semicontinuous \cite[Theorem~7]{Crew1985}. For a smooth proper surface, the Hodge Witt condition is equivalent to the vanishing of $T^{0,2}$. Hence the Hodge Witt locus in $T$ is open. If $T$ is irreducible and contains one Hodge Witt member, this locus is therefore dense and open.
%We do not, however, assert that every irreducible component of the moduli space of surfaces of general type, or of semistable fibered surfaces considered here, contains a Hodge Witt point.

For polarized K3 surfaces, the Hodge Witt condition is in fact generic in moduli. Van der Geer and Katsura stratified the moduli space by the height of the formal Brauer group. They showed that the locus of height at least $h$ has codimension $h-1$ \cite{vanDerGeerKatsura2000}. In particular, the generic K3 surface has height $1$, while the supersingular locus is the deepest stratum.

\section{Numerical and geometric estimates}\label{sec:lemmas}

\subsection{Numerical identities}

We first record the numerical identity that converts the Arakelov inequality into a lower bound for $h_W^{1,1}(S)$.

\begin{proposition}\label{prop:hW-c2}
For every smooth projective surface $S/k$,
\begin{equation}\label{eq:hW-c2}
 h_W^{1,1}(S)=c_2(S)-2\chi(\OO_S)+b_1(S).
\end{equation}
\end{proposition}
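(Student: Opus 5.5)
We would use Ekedahl's Hodge--Witt analogue of the Hodge decomposition: for a smooth proper variety the Hodge Witt numbers satisfy Crew's formula
\[
 \sum_{i}(-1)^i h_W^{i,j}\ \text{and}\ \sum_{i,j}(-1)^{i+j}h_W^{i,j}
\]
relations analogous to the classical ones. Concretely, the key inputs are the following three facts. First, $\sum_{i+j=n}h_W^{i,j}=b_n$, because the domino corrections in \eqref{eq:HW-number-def} telescope along each antidiagonal and $\sum_{i+j=n}m^{i,j}=b_n$. Second, the Hodge Witt numbers satisfy Serre-type symmetry $h_W^{i,j}=h_W^{2-i,2-j}$ and the Hodge symmetry $m^{i,j}=m^{j,i}$ for slope numbers. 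Third, Crew's formula $\sum_j(-1)^j h_W^{i,j}=\chi(S,\Omega^i_S)$ (Ekedahl, Crew), i.e.\ Hodge Witt Euler characteristics equal Hodge Euler characteristics.

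The plan is then purely numerical. Apply Crew's formula with $i=0$: $h_W^{0,0}-h_W^{0,1}+h_W^{0,2}=\chi(\OO_S)$. Here $h_W^{0,0}=1$ and $h_W^{0,1}=m^{0,1}$ (no domino terms touch position $(0,1)$ for a surface), and $m^{0,1}=b_1/2$ by the slope symmetry on $H^1$ (slopes of $H^1$ lie in $[0,1]$ and are symmetric about $1/2$, so $m^{0,1}=m^{1,0}$ with sum $b_1$). Hence $h_W^{0,2}=\chi(\OO_S)-1+b_1/2$. By the symmetry $h_W^{2,0}=h_W^{0,2}$ (via Ekedahl duality, or directly: $h_W^{2,0}=m^{2,0}+T^{2,0}-2T^{1,1}+T^{0,2}$ and $h_W^{0,2}=m^{0,2}+T^{0,2}$ with $m^{2,0}=m^{0,2}$ and the other dominoes zero). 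Then from the antidiagonal identity in degree $2$,
\[
 h_W^{1,1}=b_2-h_W^{0,2}-h_W^{2,0}=b_2-2\chi(\OO_S)+2-b_1.
\]
Finally substitute $b_2=c_2(S)-2+2b_1$, which is the $\ell$-adic Lefschetz/Euler characteristic formula $c_2=\sum(-1)^nb_n=2-2b_1+b_2$ together with Poincaré duality $b_3=b_1$, $b_4=1$. This gives $h_W^{1,1}=c_2-2\chi(\OO_S)+b_1$, as claimed.

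The main obstacle is justifying the equality $h_W^{0,2}=h_W^{2,0}$ with the domino bookkeeping in \eqref{eq:HW-number-def} correct: one must check that the only domino $T^{0,2}$ enters $h_W^{0,2}$ with coefficient $+1$ and $h_W^{2,0}$ with coefficient $+1$ (via the $T^{i-2,j+2}$ term), while $h_W^{1,1}$ gets $-2T^{0,2}$, so that the antidiagonal sum is indeed $b_2$. Alternatively one can avoid symmetry entirely by using Crew's formula also for $i=2$, together with $h_W^{2,1}=m^{2,1}=b_1/2$ and $h_W^{2,2}=1$, which yields $h_W^{2,0}=\chi(\omega_S)-1+b_1/2=h_W^{0,2}$ by Serre duality; I would present this route as it relies only on the cited Crew--Ekedahl formula.
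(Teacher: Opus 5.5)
Your proposal is correct and follows essentially the same route as the paper: Crew's formula at $i=0$ with $h_W^{0,1}=b_1/2$, the symmetry $h_W^{2,0}=h_W^{0,2}$, the antidiagonal identity $\sum_{i+j=2}h_W^{i,j}=b_2$, and the $\ell$-adic Euler characteristic $c_2=2-2b_1+b_2$. Your explicit domino bookkeeping, and the alternative via Crew's formula at $i=2$ together with Serre duality, spell out the Hodge Witt symmetry that the paper invokes in one line.
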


\begin{proof}
Crew's formula identifies the alternating sums of Hodge Witt numbers with those of the usual Hodge numbers \cite{Crew1985}.  Moreover, Hodge Witt numbers satisfy Hodge Witt symmetry and
\[
 \sum_{i+j=n}h_W^{i,j}=b_n(S).
\]
Thus, in degree one,
\[
 h_W^{0,1}=h_W^{1,0}=\frac{b_1(S)}2.
\]
The $i=0$ Euler characteristic formula therefore yields
\[
 1-\frac{b_1(S)}2+h_W^{0,2}=\chi(\OO_S),
\]
so
\[
 h_W^{0,2}=\chi(\OO_S)-1+\frac{b_1(S)}2.
\]
Since
\[
 b_2(S)=h_W^{0,2}+h_W^{1,1}+h_W^{2,0}
       =2h_W^{0,2}+h_W^{1,1},
\]
we get
\begin{equation}\label{h11}
 h_W^{1,1}=b_2(S)-2\chi(\OO_S)+2-b_1(S).
\end{equation}

Finally, the $\ell$-adic Euler characteristic of a smooth projective surface equals $c_2(S)$, hence
\[
 c_2(S)=2-2b_1(S)+b_2(S).
\]
Substitution to \eqref{h11} gives \eqref{eq:hW-c2}.
\end{proof}

For a semistable fibration, the Euler characteristic of the total space is
\begin{equation}\label{eq:c2-fibration}
 c_2(S)=4(g-1)(b-1)+\delta_f.
\end{equation}
Combining this with \eqref{eq:chi-f-def} yields the following key identity.

\begin{proposition}\label{prop:fundamental}
Let $f:S\to C$ be a semistable fibration.  Then
\begin{equation}\label{eq:fundamental}
 2\chi_f-g(2b-2+s)
 =2+A_f+r_f-G_f-h_W^{1,1}(S).
\end{equation}
Equivalently,
\begin{equation}\label{eq:arakelov-HW-equivalent}
 \chi_f\le\frac g2(2b-2+s)
\end{equation}
is equivalent to
\begin{equation}\label{eq:HW-target}
 h_W^{1,1}(S)\ge2+A_f+r_f-G_f.
\end{equation}
\end{proposition}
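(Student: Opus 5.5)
This is a direct computation: substitute the known formulas into Proposition~\ref{prop:hW-c2} and compare the result with $2\chi_f-g(2b-2+s)$.

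\emph{Step 1: Euler characteristic and $\chi(\OO_S)$.} Start from \eqref{eq:hW-c2}, $h_W^{1,1}(S)=c_2(S)-2\chi(\OO_S)+b_1(S)$. Insert \eqref{eq:c2-fibration}, $c_2(S)=4(g-1)(b-1)+\delta_f$. By \eqref{eq:chi-f-def}, $\chi(\OO_S)=\chi_f+(g-1)(b-1)$. This gives
\[
 h_W^{1,1}(S)=2(g-1)(b-1)+\delta_f-2\chi_f+b_1(S).
\]

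\emph{Step 2: nodes and Betti number.} Now substitute the dual-graph count \eqref{eq:delta-global}, $\delta_f=gs-G_f+A_f$, and write $b_1(S)=r_f+2b$. The result is
\[
 h_W^{1,1}(S)=2(g-1)(b-1)+gs-G_f+A_f-2\chi_f+r_f+2b.
\]
The constant terms collect as
\[
 2(g-1)(b-1)+2b=2gb-2g+2=g(2b-2)+2,
\]
so
\[
 h_W^{1,1}(S)=g(2b-2+s)+2+A_f+r_f-G_f-2\chi_f.
\]
Rearranging this is exactly \eqref{eq:fundamental}.

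\emph{Step 3: the equivalence.} By \eqref{eq:fundamental}, $2\chi_f-g(2b-2+s)\le0$ holds if and only if $2+A_f+r_f-G_f-h_W^{1,1}(S)\le0$. That is, \eqref{eq:arakelov-HW-equivalent} is equivalent to \eqref{eq:HW-target}.

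\emph{Where care is needed.} There is no real obstacle; the argument is bookkeeping. The one point to check is that the standing hypotheses make the inputs valid. The generic fiber must be smooth so that \eqref{eq:c2-fibration} holds with $c_2$ equal to the $\ell$-adic Euler number, and $b_1$ must be taken $\ell$-adically, consistently with Proposition~\ref{prop:hW-c2} and with the definition of $r_f$.
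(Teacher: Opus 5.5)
Your proposal is correct and follows the same route as the paper: you substitute \eqref{eq:c2-fibration}, $\chi(\OO_S)=\chi_f+(g-1)(b-1)$, \eqref{eq:delta-global} and $b_1(S)=r_f+2b$ into Proposition~\ref{prop:hW-c2} and rearrange. Your explicit collection of constants, $2(g-1)(b-1)+2b=g(2b-2)+2$, is the step the paper compresses into ``rearranging'', and the equivalence of the two inequalities then follows immediately, as you say.
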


\begin{proof}
By \cref{prop:hW-c2}, \eqref{eq:c2-fibration}, and \eqref{eq:chi-f-def},
\begin{align*}
 h_W^{1,1}(S)
 &=4(g-1)(b-1)+\delta_f
   -2\bigl(\chi_f+(g-1)(b-1)\bigr)+b_1(S)\\
 &=2(g-1)(b-1)+\delta_f-2\chi_f+b_1(S).
\end{align*}
Using \eqref{eq:delta-global} and $b_1(S)=r_f+2b$, we obtain
\[
 h_W^{1,1}(S)
 =2(g-1)(b-1)+gs-G_f+A_f-2\chi_f+r_f+2b.
\]
Rearranging gives \eqref{eq:fundamental}.
\end{proof}

We next prove the two geometric estimates needed below.  Both arguments are algebraic and remain valid in positive characteristic when formulated using crystalline divisor classes and the reduced Albanese variety.

\subsection{Divisor classes and the slope number \texorpdfstring{$m^{1,1}$}{m11}}

\begin{lemma}\label{lem:m11-lower}
For a semistable fibration $f:S\to C$,
\begin{equation}\label{eq:m11-lower}
 m^{1,1}(S)\ge2+A_f.
\end{equation}
\end{lemma}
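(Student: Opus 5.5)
We will produce $2+A_f$ linearly independent classes in $H^2_{\cris}(S/K)$ that all have Frobenius slope exactly $1$. By the definition \eqref{eq:slope-number-def}, each slope-$1$ class in $H^2$ contributes weight one to $m^{1,1}$. So $m^{1,1}(S)\ge\dim_K$ of the slope-$1$ part of $H^2_{\cris}(S/K)$. It therefore suffices to bound this dimension from below by $2+A_f$.

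\textbf{Step 1: choice of divisors.} Take $F$ a general fiber and $H$ an ample divisor on $S$. For each singular fiber $F_i$, take the components $\Gamma_{i1},\dots,\Gamma_{i,\ell_i-1}$, omitting one component per fiber. This gives a collection of
\[
 2+\sum_i(\ell_i-1)=2+A_f
\]
divisors. By (P1), their crystalline Chern classes lie in the slope-$1$ part, since $F_{\cris}$ acts on each of them by multiplication by $p$. It remains only to show that they are linearly independent.

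\textbf{Step 2: reduction to numerical independence.} By (P2), any $K$-linear relation among the crystalline classes becomes, after pairing with each of the chosen divisors, a relation in the intersection matrix. We also pair against the classes themselves. Hence it suffices to show that the Gram matrix of intersection numbers of
\[
 H,\ F,\ \Gamma_{ij}\ (1\le j\le \ell_i-1)
\]
is nondegenerate. This is a statement about rational numbers, so the nondegeneracy can be checked over $\mathbb Q$. Nondegeneracy over $\mathbb Q$ then gives nondegeneracy over $K$, because we only need the determinant to be nonzero. A nonzero Gram determinant forces linear independence of the classes.

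\textbf{Step 3: the Gram matrix computation, the main step.} Let $V$ be the real span of the $\Gamma_{ij}$ with $j\le\ell_i-1$. By (P3), the Zariski lemma, the intersection form restricted to the components of $F_i$ is negative semidefinite with radical $\mathbb R F_i$. Removing one component kills the radical, so the form is negative definite on the span of the chosen components of $F_i$. Components of different fibers are orthogonal, so the form is negative definite on all of $V$.

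Now consider the hyperbolic plane spanned by $H$ and $F$. Here $F^2=0$ and $H\cdot F>0$, so the $2\times2$ Gram block has determinant $-(H\cdot F)^2<0$ and the plane is nondegenerate. Also $F\cdot V=0$.

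Suppose $aH+bF+v$ lies in the radical of the total form, with $v\in V$.
\begin{enumerate}[label=(\roman*)]
 \item Pairing with $F$ gives $a\,H\cdot F=0$, so $a=0$.
 \item Pairing with $V$ then gives $(v,V)=0$, since $F\cdot V=0$. Negative definiteness of $V$ forces $v=0$.
 \item Pairing with $H$ finally gives $b\,H\cdot F=0$, so $b=0$.
\end{enumerate}
Hence the form is nondegenerate on the full span, and its dimension is $2+A_f$.

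\textbf{Expected obstacle.} The only delicate point is justifying that nondegeneracy of the numerical pairing forces linear independence in $H^2_{\cris}(S/K)$. This follows from (P2), because a linear dependence would make the Gram matrix singular. One should also confirm the counting convention for $m^{1,1}$: slope-$1$ multiplicity $h^2_1$ enters $m^{1,1}$ with coefficient $1$. This convention gives $m^{1,1}\ge h^2_1\ge 2+A_f$.
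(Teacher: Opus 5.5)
Your proposal is correct and follows essentially the same route as the paper. Both arguments take the slope-$1$ crystalline Chern classes of $H$, $F$ and the fiber components, and deduce their $K$-linear independence from a nondegenerate rational Gram matrix via Zariski's lemma. The only cosmetic difference is that you fix an explicit basis, dropping one component per singular fiber so the vertical part is negative definite. The paper instead takes the span of $F$ and all components, identifies its radical as $\mathbb Q[F]$, and then adjoins $H$.
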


\begin{proof}
Let $F$ denote a fiber class and define
\[
 V_{\mathrm{vert}}
 :=\mathbb Q[F]
   +\sum_{i=1}^s\sum_{j=1}^{\ell_i}\mathbb Q[\Gamma_{ij}]
 \subset \NS(S)_{\mathbb Q}.
\]
For a fixed singular fiber
\[
 F_i=\Gamma_{i1}+\cdots+\Gamma_{i\ell_i},
\]
Zariski's lemma says that the intersection matrix of the $\Gamma_{ij}$ is negative semidefinite. Its radical is generated by the total fiber class $[F_i]$. All fibers are numerically equivalent, and components in distinct fibers are disjoint. Thus the classes from the $i$th singular fiber have dimension $\ell_i-1$ modulo $\mathbb Q[F]$. The contributions from distinct singular fibers are independent. Hence
\begin{equation}\label{eq:vertical-dimension}
 \dim_{\mathbb Q}V_{\mathrm{vert}}
 =1+\sum_{i=1}^s(\ell_i-1)=1+A_f,
\end{equation}
and the radical of the intersection form on $V_{\mathrm{vert}}$ is the one-dimensional space $\mathbb Q[F]$.  Note that this formulation also covers the case $s=0$.

Choose an ample divisor $H$ and put
\[
 V:=V_{\mathrm{vert}}+\mathbb Q[H]\subset\NS(S)_{\mathbb Q}.
\]
Because $H\cdot F>0$, the class $[H]$ does not belong to $V_{\mathrm{vert}}$, so
\[
 \dim_{\mathbb Q}V=2+A_f.
\]
Moreover, the intersection form restricted to $V$ is nondegenerate.  Indeed, suppose that
\[
 x=v+aH\in V,
 \qquad v\in V_{\mathrm{vert}},
\]
is orthogonal to $V$.  Intersecting with $F$ gives
\[
 0=x\cdot F=a(H\cdot F),
\]
hence $a=0$.  Then $v$ is orthogonal to $V_{\mathrm{vert}}$, so $v=cF$ by the description of the radical above.  Finally, $0=v\cdot H=c(F\cdot H)$ gives $c=0$.

Let $d=2+A_f$ and choose a $\mathbb Q$-basis $D_1,\ldots,D_d$ of $V$.  Set
\[
 W_V:=\operatorname{Span}_K\bigl\{c_1^{\cris}(D_1),\ldots,c_1^{\cris}(D_d)\bigr\}
 \subset H^2_{\cris}(S/K).
\]
We claim that these crystalline Chern classes are $K$-linearly independent.  Indeed, suppose that
\[
 \sum_{i=1}^d a_i c_1^{\cris}(D_i)=0,
 \qquad a_i\in K.
\]
Cupping with $c_1^{\cris}(D_j)$ and using \eqref{eq:crys-intersection} for every $j$ gives
\[
 \sum_{i=1}^d a_i(D_i\cdot D_j)=0.
\]
The intersection form on $V$ is nondegenerate, so the matrix $(D_i\cdot D_j)_{i,j}$ is nonsingular over $\mathbb Q$. It is therefore also nonsingular over $K$. Hence $a_1=\cdots=a_d=0$, and
\[
 \dim_K W_V=d=2+A_f.
\]

By \eqref{eq:crys-c1-frob}, $W_V$ is Frobenius-stable and Frobenius acts on it as $p$ times the scalar Frobenius.  Thus $W_V$ is isoclinic of slope $1$.  The multiplicity of slope $1$ in $H^2_{\cris}(S/K)$ is therefore at least $2+A_f$.  Since a slope equal to $1$ contributes with weight exactly $1$ to $m^{1,1}$, we conclude that
\[
 m^{1,1}(S)\ge2+A_f.
\]
\end{proof}

\begin{remark}\label{rem:no-tate}
The proof of Lemma~\ref{lem:m11-lower} uses only the explicitly constructed subspace $V\subset\NS(S)_{\mathbb Q}$.  The nondegenerate intersection matrix proves $K$-linear independence of the corresponding crystalline first Chern classes directly.  Thus no information about the cycle class map on the whole N\'eron--Severi group is needed.
\end{remark}

Combining Lemma~\ref{lem:m11-lower} with \eqref{eq:HW11-surface} gives
\begin{equation}\label{eq:hW-weak-lower}
 h_W^{1,1}(S)
 \ge2+A_f-2T^{0,2}(S).
\end{equation}

\subsection{An Albanese estimate for singular fibers}

The next statement is the positive-characteristic version of the weak Beauville-type estimate needed here. We formulate it using $b_1(S)$, or equivalently the reduced Albanese variety, rather than $h^1(S,\OO_S)$.

\begin{lemma}\label{lem:alb-fiber}
Let $F_i$ be a singular fiber of $f$.  Then
\begin{equation}\label{eq:alb-fiber}
 \dim\Alb(S)-b\le g_i.
\end{equation}
Equivalently,
\begin{equation}\label{eq:r-le-2gi}
 r_f\le2g_i.
\end{equation}
\end{lemma}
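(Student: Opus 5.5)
We prove the inequality by restricting the relative Albanese part of $\Alb(S)$ to the singular fiber $F_i$ and comparing it with the generalized Jacobian of $F_i$.

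\textbf{Setup.} Let $A=\Alb(S)$ be the reduced Albanese variety, of dimension $\dim\Alb(S)=b_1(S)/2$. Let $\alpha\colon S\to A$ be the Albanese map. The fibration induces a surjection $A\to\Jac(C)$; let $B$ be the connected component of its kernel. Then $B$ is an abelian variety of dimension $\dim\Alb(S)-b=r_f/2$. We will show $\dim B\le g_i$.

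\textbf{Step 1: the fiber maps into a translate of $B$.} Every fiber $F_t$ maps under $\alpha$ into a translate of $B$, because its image in $\Jac(C)$ is a point. Moreover, $\alpha$ restricted to a general smooth fiber $F_t$ generates a translate of $B$. Equivalently, the induced homomorphism $\Jac(F_t)\to B$ is surjective. This holds because the images of all fibers generate $A$ together with the pullback of $\Jac(C)$.

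\textbf{Step 2: specialize to $F_i$.} Surjectivity is a closed property in families. Alternatively, argue via $\ell$-adic cohomology: the restriction $H^1_{\et}(B,\mathbb Q_\ell)\to H^1_{\et}(F_t,\mathbb Q_\ell)$ is injective. We want the same injectivity for the singular fiber $F_i$. Cleanest is the Picard side. Dually, $B^\vee\to\Pic^0(F_t)$ has finite kernel, and the family of these maps extends over $t_i$ to a homomorphism $B^\vee\to\Pic^0(F_i)$. The identity component of $\Pic^0$ of the semistable fiber $F_i$ is the generalized Jacobian. It is an extension of $\prod_j\Jac(\widetilde\Gamma_{ij})$, an abelian variety of dimension $g_i$, by a torus of rank $g-g_i$.

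\textbf{Step 3: finite kernel at the special point.} The map $B^\vee\to\Pic^0(F_i)$ still has finite kernel. Its kernel is a closed subgroup scheme of $B^\vee$, and the kernels over the family form a proper group scheme over a neighbourhood of $t_i$ that is finite generically. I expect this to be the main obstacle: upper semicontinuity of fiber dimension only bounds the special kernel from below. I would instead argue with the restriction map on $H^1_{\et}(-,\mathbb Q_\ell)$ composed with pullback. The class of $F_i$ is the class of $F_t$, and $S$ deformation retracts, in the proper base change sense, onto $F_i$ over a strict henselization at $t_i$. So $H^1(S_{(t_i)})\cong H^1(F_i)$, and the composition $H^1(B)\to H^1(S)\to H^1(F_i)\to H^1(F_{\bar\eta})$ is injective. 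Hence $H^1(B)\to H^1(F_i)$ is injective.

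\textbf{Step 4: weight comparison.} $H^1(B)$ is pure of weight one. In $H^1_{\et}(F_i,\mathbb Q_\ell)$, the weight-one part is $\bigoplus_j H^1(\widetilde\Gamma_{ij})$, of dimension $2g_i$. The remaining part, of dimension $g-g_i$, has weight zero and comes from the dual graph. This can be checked after spreading out to a finite field, or by the exact sequence for normalization. An injective Frobenius-equivariant map from a pure weight-one space lands in the weight-one part. Therefore $2\dim B\le2g_i$, giving $r_f\le 2g_i$, which is the statement of the Lemma.
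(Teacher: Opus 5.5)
Your route is sound in outline, but it runs in the opposite direction from the paper's. The paper never leaves the singular fibre. It lets $A_i\subseteq\Alb(S)$ be the image of $\Jac(\widetilde F_i)$, notes that $S\to Q_i=\Alb(S)/A_i$ contracts $F_i$, and then transfers this contraction \emph{numerically} to the general fibre. The Hodge index theorem, with $D\cdot F_i=0$ and $F_i^2=0$, excludes a surface image. The condition $D\cdot F_i=0$ then forces the generic fibre to be contracted, so $S\to Q_i$ factors through $C$ and $\dim Q_i\le b$. You instead show that the general fibre generates $B$, and transfer this \emph{cohomologically} to $F_i$ by proper base change over the strict henselization, finishing with weights. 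This avoids the Hodge index theorem but costs \'etale specialization and an arithmetic weight argument. The paper's route is more elementary and self-contained.

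Two steps need to be filled in.

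In Step 1, the fact that all fibres together generate $A$ does not by itself show that a single general fibre generates $B$. The missing argument goes as follows. Let $B_0\subseteq B$ be the image of $\Jac(F_{\bar\eta})$; it is defined over $k$ by rigidity of abelian subvarieties. The map $S\to A/B_0$ is constant on the generic fibre, hence factors through $C$, hence $\dim A/B_0\le b$. This is the paper's factorization argument, placed in the easier generic position. Also, $B$ is a subvariety of $A$, not a quotient, so to pull classes from $H^1(B)$ back to $H^1(S)$ you need a quotient $A\to B'$ restricting to an isogeny on $B$; Poincar\'e reducibility provides one.

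In Step 4, $k$ is algebraically closed, so there is no Frobenius, and weights would require an actual spreading-out to a finite field. You can bypass this. Suppose $x\in H^1_{\et}(B',\mathbb Q_\ell)$ restricts to zero on $\widetilde F_i$. Then $x$ kills $V_\ell A_i$, where $A_i$ is the image of $\Jac(\widetilde F_i)\to B'$, so $x$ comes from $H^1_{\et}(B'/A_i,\mathbb Q_\ell)$. Since $F_i$ maps to a point of $B'/A_i$, we get $x|_{F_i}=0$, and your Step 3 then gives $x=0$. This is exactly the paper's $A_i$, $Q_i$ construction. So the two proofs really differ only in how they show that the quotient contracting $F_i$ has dimension at most $b$.

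Finally, the tentative detours in Step 2 (closedness of surjectivity, extending $B^\vee\to\Pic^0$) are unjustified and unused, and should be deleted.
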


\begin{proof}
Let
\[
 \nu_i:\widetilde F_i=\coprod_{j=1}^{\ell_i}\widetilde\Gamma_{ij}
 \longrightarrow F_i\hookrightarrow S
\]
be the normalization of the reduced fiber.  After choosing base points,
$\nu_i$ induces a homomorphism of abelian varieties
\[
 \beta_i:\Jac(\widetilde F_i)
 =\prod_{j=1}^{\ell_i}\Jac(\widetilde\Gamma_{ij})
 \longrightarrow \Alb(S).
\]
Let $A_i\subset\Alb(S)$ be the image of $\beta_i$.  Since the dimension of
the image of a homomorphism of abelian varieties is at most the dimension of
its source,
\begin{equation}\label{eq:Ai-upper}
 \dim A_i\le\dim\Jac(\widetilde F_i)
 =\sum_j g(\widetilde\Gamma_{ij})=g_i.
\end{equation}

Set
\[
 Q_i:=\Alb(S)/A_i
\]
and let
\[
 \bar\alpha_i:S\longrightarrow Q_i
\]
be the composite of the Albanese morphism with the quotient map.  For every
normalized component $\widetilde\Gamma_{ij}$, the induced homomorphism
\[
 \Jac(\widetilde\Gamma_{ij})\longrightarrow Q_i
\]
is zero.  Hence the morphism $\widetilde\Gamma_{ij}\to Q_i$ is constant.
If two components meet at a node, their constants agree because both branches map to the same point of $S$. The dual graph of $F_i$ is connected. Hence all the constants agree. Thus
$\bar\alpha_i(F_i)$ is a single point.

We claim that $\dim Q_i\le b$.  Since the image of the Albanese morphism of
$S$ generates $\Alb(S)$, the image $\bar\alpha_i(S)$ generates $Q_i$.
First suppose that $\dim\bar\alpha_i(S)=2$.  Choose an ample divisor $L$ on
$Q_i$ and put $D=\bar\alpha_i^*L$.  Then $D$ is nef and big, so $D^2>0$.
Since $F_i$ is contracted,
\[
 D\cdot F_i=0,\qquad F_i^2=0.
\]
The fiber class is nonzero in $\NS(S)_{\mathbb R}$ because it has positive intersection with every ample divisor on $S$. The Hodge index theorem now gives a contradiction. Hence $\dim\bar\alpha_i(S)\le1$.

If $\bar\alpha_i(S)$ is a point, then $Q_i=0$ because this image generates
$Q_i$.  Otherwise $\bar\alpha_i(S)$ is a curve.  Again choose an ample divisor
$L$ on $Q_i$ and set $D=\bar\alpha_i^*L$.  All fibers of $f$ are numerically equivalent. Since $D\cdot F_i=0$, the restriction of $D$ to the geometric generic fiber $F_{\bar\eta}$ has degree zero.  If
$\bar\alpha_i|_{F_{\bar\eta}}$ were nonconstant, then
$(\bar\alpha_i|_{F_{\bar\eta}})^*L$ would have positive degree, a
contradiction.  Hence $\bar\alpha_i$ is constant on the geometric generic
fiber.  It therefore induces a rational map $u_i:C\dashrightarrow Q_i$ such
that $\bar\alpha_i=u_i\circ f$ over a nonempty open subset of $C$.  Since $C$
is a smooth projective curve and $Q_i$ is proper, $u_i$ extends to a morphism
$C\to Q_i$.  The two morphisms $\bar\alpha_i$ and $u_i\circ f$ agree on a dense open subset of $S$. Since $Q_i$ is separated, they agree everywhere.
Therefore
\[
 \bar\alpha_i:S\xrightarrow{f}C\xrightarrow{u_i}Q_i.
\]

Since $u_i(C)=\bar\alpha_i(S)$ generates $Q_i$, the universal property of the
Jacobian gives a surjective homomorphism
\[
 \Jac(C)\twoheadrightarrow Q_i.
\]
Consequently
\[
 \dim Q_i\le\dim\Jac(C)=b.
\]
Since $\dim\Alb(S)=\dim A_i+\dim Q_i$, we obtain
\[
 \dim A_i\ge\dim\Alb(S)-b.
\]
Combining this with \eqref{eq:Ai-upper} gives
\[
 \dim\Alb(S)-b\le g_i,
\]
which proves \eqref{eq:alb-fiber}.  Finally, using
$b_1(S)=2\dim\Alb(S)$ gives \eqref{eq:r-le-2gi}.
\end{proof}

\begin{remark}\label{rem:alb-vs-h1}

\begin{enumerate}
\item [(1)] This argument is entirely algebraic and does not use separability of any of the maps.  It is the positive-characteristic analogue of the Jacobian--Albanese argument used by Beauville and by Lu--Tan--Yu--Zuo; compare \cite{LTYZ2014}.
\item [(2)] Over $\mathbb C$ one usually writes Lemma~\ref{lem:alb-fiber} as $g(F_i)\ge q(S)-b$.  In characteristic $p$, $\Pic^0(S)$ may be nonreduced, so the characteristic-free form is $g(F_i)\ge \dim\Alb(S)-b=(b_1(S)-2b)/2$.
\end{enumerate}
\end{remark}

\begin{corollary}\label{cor:G-r}
If $s\ge2$, then
\begin{equation}\label{eq:G-ge-r}
 G_f\ge r_f.
\end{equation}
If $C=\mathbb P^1$ and $s\ge4$, then
\begin{equation}\label{eq:G-ge-2r}
 G_f\ge2r_f=2b_1(S).
\end{equation}
\end{corollary}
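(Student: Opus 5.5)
The corollary follows by summing Lemma~\ref{lem:alb-fiber} over the singular fibers; no new geometry is needed. Since $G_f=\sum_{i=1}^s g_i$ with each $g_i\ge0$, and Lemma~\ref{lem:alb-fiber} gives $g_i\ge r_f/2$ for every singular fiber $F_i$, we obtain the general bound
\[
 G_f\ge s\cdot\frac{r_f}{2}.
\]
Both statements are read off from this inequality.

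\emph{First claim.} If $s\ge2$, then $G_f\ge \frac{s}{2}r_f\ge r_f$, because $r_f\ge0$. This gives \eqref{eq:G-ge-r}.

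\emph{Second claim.} Take $C=\mathbb P^1$, so $b=0$ and $r_f=b_1(S)-2b=b_1(S)$. If $s\ge4$, then
\[
 G_f\ge\frac{s}{2}r_f\ge 2r_f=2b_1(S),
\]
which is \eqref{eq:G-ge-2r}.

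The argument has no real obstacle once Lemma~\ref{lem:alb-fiber} is available. The only points to check are that $r_f\ge0$, which holds because $f^*$ is injective on $H^1$ and so $b_1(S)\ge 2b$, and that the lemma applies to each singular fiber separately, which it does since it was proved for an arbitrary singular fiber $F_i$.
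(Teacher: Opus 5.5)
Your proposal is correct and follows the paper's proof exactly: sum the per-fiber bound $g_i\ge r_f/2$ from Lemma~\ref{lem:alb-fiber} to get $G_f\ge\frac{s}{2}r_f$, then specialize to $s\ge2$, and to $b=0$ with $s\ge4$. Your explicit check that $r_f\ge0$ is a step the paper leaves implicit, and it is exactly what is needed to pass from $\frac{s}{2}r_f$ to $r_f$ and to $2r_f$.
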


\begin{proof}
By Lemma~\ref{lem:alb-fiber}, each $g_i\ge r_f/2$.  Summing over the singular fibers gives
\[
 G_f\ge\frac{s}{2}r_f.
\]
The conclusions follow.
\end{proof}

\section{Proofs of the main results}\label{sec:proofs}

We first establish the refined estimate announced in \eqref{eq:refined-intro}. It is the main numerical input for Theorems~\ref{thm:hodge-witt} and~\ref{thm:P1}.

\begin{proposition}[Refined Arakelov inequality]\label{prop:correction}
For every relatively minimal semistable fibration as above,
\begin{equation}\label{eq:main-refined}
 \deg f_*\omega_{S/C}
 \le \frac g2(2b-2+s)
 +T^{0,2}(S)+\frac12\bigl(r_f-G_f\bigr).
\end{equation}
If $s\ge2$, then
\begin{equation}\label{eq:main-tail-simple}
 \deg f_*\omega_{S/C}
 \le \frac g2(2b-2+s)+T^{0,2}(S).
\end{equation}
\end{proposition}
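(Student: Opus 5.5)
The plan is to combine the fundamental identity of \cref{prop:fundamental} with the lower bound \eqref{eq:hW-weak-lower} for $h_W^{1,1}(S)$. Everything needed is already in place, so the proof is a direct numerical substitution followed by an application of \cref{cor:G-r}.

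First, I would rewrite \eqref{eq:fundamental} as
\[
 2\chi_f = g(2b-2+s) + 2 + A_f + r_f - G_f - h_W^{1,1}(S).
\]
Next, I would bound $h_W^{1,1}(S)$ from below. By \cref{lem:m11-lower} and the surface identity \eqref{eq:HW11-surface},
\[
 h_W^{1,1}(S) = m^{1,1}(S) - 2T^{0,2}(S) \ge 2 + A_f - 2T^{0,2}(S),
\]
which is \eqref{eq:hW-weak-lower}. Substituting this into the displayed identity cancels the terms $2+A_f$ and gives
\[
 2\chi_f \le g(2b-2+s) + 2T^{0,2}(S) + r_f - G_f .
\]
Dividing by $2$ and recalling that $\chi_f=\deg f_*\omega_{S/C}$ by \eqref{eq:chi-f-def} yields \eqref{eq:main-refined}. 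Relative minimality and semistability enter only through the identities already used: $\rk f_*\omega_{S/C}=g$, the formula \eqref{eq:c2-fibration} for $c_2(S)$, and the node count \eqref{eq:delta-global}.

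For the second assertion, assume $s\ge2$. Then \cref{cor:G-r} gives $G_f\ge r_f$, so the correction term $\tfrac12(r_f-G_f)$ is nonpositive. Dropping it from \eqref{eq:main-refined} gives \eqref{eq:main-tail-simple}.

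No step is a real obstacle, since the analytic content already lives in \cref{lem:m11-lower} and \cref{lem:alb-fiber}. The only point needing care is bookkeeping of the domino term. Its coefficient $2$ in \eqref{eq:HW11-surface} becomes $1$ after halving, which matches the statement. The sign also works out, because $-h_W^{1,1}\le -(2+A_f)+2T^{0,2}$.
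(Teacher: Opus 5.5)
Your proposal is correct and is essentially the paper's own proof. It substitutes $h_W^{1,1}(S)=m^{1,1}(S)-2T^{0,2}(S)\ge 2+A_f-2T^{0,2}(S)$ (from \cref{lem:m11-lower}) into the identity of \cref{prop:fundamental}, halves, and for $s\ge2$ drops the nonpositive term $\frac12(r_f-G_f)$ using \cref{cor:G-r}. One small correction to your aside: relative minimality is not actually used anywhere, and $\rk f_*\omega_{S/C}=g$ holds for any fibration whose generic fiber is smooth of genus $g$.
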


\begin{proof}
By Proposition~\ref{prop:fundamental},
\[
 2\chi_f-g(2b-2+s)=2+A_f+r_f-G_f-h_W^{1,1}(S).
\]
Using \eqref{eq:HW11-surface} and Lemma~\ref{lem:m11-lower},
\[
 h_W^{1,1}(S)=m^{1,1}(S)-2T^{0,2}(S)
 \ge2+A_f-2T^{0,2}(S).
\]
Hence
\[
 2\chi_f-g(2b-2+s)\le r_f-G_f+2T^{0,2}(S),
\]
This gives \eqref{eq:main-refined}. If $s\ge2$, Corollary~\ref{cor:G-r} gives $G_f\ge r_f$. Hence \eqref{eq:main-tail-simple} follows.
\end{proof}

We first prove Theorem~\ref{thm:hodge-witt}.

\begin{proof}
Since $S$ is Hodge Witt, $T^{0,2}(S)=0$. Since $s\ge2$, Proposition~\ref{prop:correction} gives
\[
 \deg f_*\omega_{S/C}\le\frac g2(2b-2+s).
\]
\end{proof}

We next prove Theorem~\ref{thm:P1}.

\begin{proof}
By Nguyen's theorem \cite{Nguyen1998}, non-isotriviality implies $s\ge4$.  Here $b=0$, hence $r_f=b_1(S)$.  Since $S$ is Hodge Witt, $T^{0,2}(S)=0$.  Corollary~\ref{cor:G-r} therefore gives
\[
 G_f\ge2r_f.
\]
Applying \eqref{eq:main-refined},
\begin{align*}
 \deg f_*\omega_{S/\mathbb P^1}
 &\le \frac g2(s-2)+\frac12(r_f-G_f)\\
 &\le \frac g2(s-2)-\frac12r_f\\
 &=\frac g2(s-2)-\frac12b_1(S).
\end{align*}
This proves \eqref{eq:P1-strong}, and \eqref{eq:P1-classical} follows immediately.
\end{proof}

\begin{remark}[The case $s=4$]\label{rem:s-four}
Under the hypotheses of Theorem~\ref{thm:P1}, suppose that $s=4$.  The refined inequality \eqref{eq:P1-strong} is then necessarily an equality.  Indeed, the $i=0$ identity used in the proof of Proposition~\ref{prop:hW-c2}, together with $b=0$, gives
\[
 h_W^{0,2}(S)=\chi_f-g+\frac12b_1(S).
\]
Since $S$ is Hodge Witt, $h_W^{0,2}(S)=m^{0,2}(S)\ge0$. Hence
\[
 \chi_f=g-\frac12b_1(S)+m^{0,2}(S).
\]
On the other hand, Theorem~\ref{thm:P1} gives
\[
 \chi_f\le g-\frac12b_1(S).
\]
It follows that $m^{0,2}(S)=0$. Therefore
\[
 \chi_f=g-\frac12b_1(S).
\]
Thus $s=4$ is an equality case of the refined Arakelov inequality \eqref{eq:P1-strong}.
\end{remark}

\section{Four singular fibers in characteristic $5$}\label{sec:four-fibers}

Throughout this section $k$ is algebraically closed of characteristic $5$.  For the genus-$2$ fibration over $\mathbb P^1$ constructed below, the standard relative identities specialize to
\begin{equation}\label{eq:genus2formulas}
 K_{S/\mathbb P^1}^2=K_S^2+8,\qquad
 \chi_f=\chi(\mathcal O_S)+1,\qquad
 \delta_f=c_2(S)+4.
\end{equation}
Let $C_0$ be the negative section of the Hirzebruch surface $\mathbb F_3$ and let $\Gamma$ be a ruling fiber.  Then
\[
 C_0^2=-3,\qquad C_0\cdot\Gamma=1,\qquad \Gamma^2=0,
\]
and
\[
 K_{\mathbb F_3}=-2C_0-5\Gamma.
\]
The divisor $D=2C_0+6\Gamma$ satisfies
\[
 D^2=12,\qquad D\cdot K_{\mathbb F_3}=-10,
\]
so adjunction gives $p_a(D)=2$.

\subsection{Nguyen's pencil and its reduction modulo $5$}

The pencil used below is the genus-$2$ pencil reproduced by Nguyen in \cite[\S3.2.3]{Nguyen1995}, originating in work of M.-H.~Saito \cite{Saito1995}.  We examine its reduction in characteristic $5$ and resolve the resulting special fibers.

Let $k$ be algebraically closed of characteristic $5$.  On an affine chart of $\FF_3$ with base coordinate $x$ and fiber coordinate $w$, consider
\begin{equation}\label{eq:pencil}
 F_0+tF_1=0,
\end{equation}
where
\begin{equation}\label{eq:F0F1}
 F_0=w\bigl(w+x(x-1)\bigr),\qquad
 F_1=\bigl(x^4-(x-1)^4\bigr)w-x(x-1)^5.
\end{equation}
After homogenization, both members lie in $|2C_0+6\Gamma|$.  

Set
$
 y=2w+x(x-1)+t\bigl(x^4-(x-1)^4\bigr).
$
Then
\begin{equation}\label{eq:hyper}
 y^2=P(x,t),
\end{equation}
where
\begin{equation}\label{eq:P}
 P(x,t)=4t\,x(x-1)^5+
 \left(x(x-1)+t\bigl(x^4-(x-1)^4\bigr)\right)^2.
\end{equation}

\begin{lemma}\label{lem:discriminant}
For the affine sextic $P(x,t)$, one has
\begin{equation}\label{eq:disc-affine}
 \operatorname{Disc}_x(P)
 =4096\,t^{13}(t-1)(4t+1)(8t+1)^2.
\end{equation}
Its leading coefficient in $x$ is $4t(4t+1)$. Hence, up to a nonzero constant,
\begin{equation}\label{eq:nguyen-discriminant}
 \operatorname{Res}_x(P,P_x)
 =t^{14}(4t+1)^2(8t+1)^2(t-1).
\end{equation}
%The latter is the discriminant factor recorded by Nguyen for this example.  

Consequently, after reduction modulo $5$, the possible singular parameters are precisely
\[
 0,\qquad 1,\qquad 3,\qquad \infty.
\]
\end{lemma}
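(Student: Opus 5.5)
The plan is to compute over $\mathbb Z[t]$ (in fact over $\mathbb Z$), treating $P$ as a polynomial of degree $6$ in $x$, and then reduce modulo $5$. First I expand $P(x,t)$ in powers of $x$. The term $x^4-(x-1)^4=4x^3-6x^2+4x-1$ has degree $3$, so its square contributes $16t^2x^6$. The term $4t\,x(x-1)^5$ contributes $4tx^6$. Hence the $x^6$-coefficient is $16t^2+4t=4t(4t+1)$, which is the asserted leading coefficient.

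Next I compute $\operatorname{Disc}_x(P)$. To organize the computation, write $P=Q^2+4tR$ with $Q=x(x-1)+t(x^4-(x-1)^4)$ and $R=x(x-1)^5$. Then use $\operatorname{Disc}(P)=(-1)^{15}a_6^{-1}\operatorname{Res}_x(P,P_x)$ together with multiplicativity of resultants. In practice this is a finite symbolic computation in one parameter $t$, and I would certify it by computer algebra.

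As a sanity check on the factor $t^{13}$, note that at $t=0$ we get $P=x^2(x-1)^2$, so the roots $0$ and $1$ are each double. A local analysis of how the roots split near $t=0$ should then predict the high order of vanishing in $t$. The displayed value $\operatorname{Res}_x(P,P_x)$ then follows from $\operatorname{Res}=\pm a_6\operatorname{Disc}$, which supplies the extra factor $t(4t+1)$ and the constant $4\cdot 4096$. This accounts for the exponents $14$ and $2$.

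Now I reduce modulo $5$. Since $4096=2^{12}$ and $4$ are units mod $5$, the finite zeros of the discriminant come from the factors $t$, $t-1$, $4t+1$ and $8t+1$. Modulo $5$ these give $t\equiv 0$, $1$, $-1/4\equiv 1$, and $-1/8\equiv -1/3\equiv 3$, since $3\cdot 3=9\equiv -1$. So the finite candidate singular values are $0,1,3$.

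The leading coefficient $4t(4t+1)$ vanishes at $t=0$ and at $t\equiv 1$. At those values the degree in $x$ drops, which signals branching at $x=\infty$, and this is consistent with the same list. The parameter $t=\infty$ has to be added: the fiber $F_1=0$ is a further member of the pencil that the chart $t\in\mathbb A^1$ does not see. Checking the degree of the discriminant, including the root at infinity of the binary sextic, should confirm that $\infty$ is a candidate.

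I would also make sure that specializing the discriminant commutes with reduction mod $5$. This holds because the resultant is a universal integer polynomial in the coefficients, and the leading coefficient is not identically zero mod $5$.

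The main obstacle is the verification of the exact discriminant formula, a degree-heavy symbolic computation. I would do it by machine, and cross-check it by evaluating at a few integer values of $t$ and comparing with the numerical resultant.

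A further subtlety concerns the word ``precisely''. Outside $\{0,1,3,\infty\}$ the sextic is separable of degree $6$, so the double cover of the ruling is smooth of genus $2$ and those parameters are not singular. The lemma therefore only asserts that the singular values lie in this set. Showing that each of the four actually is singular is left to the subsequent fiber analysis.
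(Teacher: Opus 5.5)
Your proposal is correct and is essentially the paper's argument. You compute $\operatorname{Disc}_x(P)$ directly, pass to $\operatorname{Res}_x(P,P_x)=(-1)^{15}a_6\operatorname{Disc}_x(P)$ with $a_6=4t(4t+1)$, and reduce the factors $t,\ t-1,\ 4t+1,\ 8t+1$ modulo $5$ to get $0,1,1,3$, adjoining $\infty$ as the parameter outside the affine chart; your remarks on base change of the universal discriminant and on reading ``precisely'' as describing the candidate set are, if anything, more careful than the paper's one-line proof.

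If you want to avoid the machine step, note that $P(x,t)=G(x^2-x,t)$ with $G(s,t)=(16t^2+4t)s^3+(20t^2+8t+1)s^2+(8t^2+2t)s+t^2$. This gives $\operatorname{Disc}_x(P)=-64\,a_6\,G(-\tfrac14,t)\operatorname{Disc}_s(G)^2$, and since $G(-\tfrac14,t)=(1-t)/16$ and $\operatorname{Disc}_s(G)=16t^6(8t+1)$, the stated formula follows by hand.
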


\begin{proof}
A direct discriminant calculation gives \eqref{eq:disc-affine}.  If $a_6(t)$ denotes the leading coefficient of $P$ as a polynomial in $x$, then
\[
 a_6(t)=4t(4t+1).
\]
For a sextic,
\[
 \operatorname{Res}_x(P,P_x)=(-1)^{15}a_6(t)\operatorname{Disc}_x(P),
\]
which gives \eqref{eq:nguyen-discriminant}.  

%Thus the apparent extra powers in Nguyen's displayed factor come from the leading coefficient: his factor agrees, up to a unit, with the resultant rather than with the affine polynomial discriminant divided by the leading coefficient.

Note that in characteristic zero the finite support is $0,1,-1/4,-1/8$.  In characteristic $5$,
\[
 -\frac14=1,\qquad -\frac18=3,
\]
and \eqref{eq:disc-affine} reduces, up to a nonzero scalar, to
\[
 t^{13}(t-1)^2(t-3)^2.
\]
This proves the assertion.
\end{proof}

\subsection{The fiber over $t=0$}

\begin{proposition}\label{prop:fiber0}
After resolving the base locus of the pencil \eqref{eq:pencil}, the fiber over $t=0$ is a reduced simple normal crossings divisor. It has $12$ rational irreducible components and $13$ nodes. In particular, it is semistable and
\[
 \delta(F_0)=13.
\]
The resolution requires exactly six successive blow-ups over each of the two proper base points.
\end{proposition}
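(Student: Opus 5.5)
The plan is to carry out the resolution explicitly in local coordinates and read off the combinatorics of the total transform of the member $F_0=0$. First I would determine the base locus of the pencil \eqref{eq:pencil}. The member $t=0$ is $w\bigl(w+x(x-1)\bigr)=0$, the union of two curves $B_1=\{w=0\}$ and $B_2=\{w=-x(x-1)\}$. Both are smooth rational sections of $\FF_3\to\PP^1$, and they meet transversally at $(x,w)=(0,0)$ and $(1,0)$.

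Restricting $F_1$ to each branch gives
\[
 F_1|_{B_1}=-x(x-1)^5,\qquad
 F_1|_{B_2}=-x(x-1)\bigl[x^4-(x-1)^4+(x-1)^4\bigr]=-x^5(x-1).
\]
Hence the base points are exactly $P=(0,0)$ and $Q=(1,0)$. At each of them the local intersection number $(F_0\cdot F_1)$ is $1+5=6$. Since $D^2=12=6+6$, there are no further base points, in particular none on the chart at infinity. I would also check that $F_1$ is smooth at $P$ and $Q$. Its tangent line is then forced: at $P$ it is $B_1$, which it meets to order $5$ along $B_2$ and order $1$ along $B_1$; the symmetric statement holds at $Q$.

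Next I would blow up $P$ and then successively the points of the proper transform of the general member lying on the newest exceptional curve. Because the general member is smooth at $P$ and meets $F_1$ with multiplicity $6$ there, the pencil becomes base-point free after exactly six blow-ups over $P$, and likewise over $Q$. This establishes the last assertion. At each step I would track, in local coordinates $(x,w)\mapsto(x,w/x)$ or similar, where the strict transforms of $B_1$ and $B_2$ and the previous exceptional curves pass.

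The outcome should be as follows. The sixth exceptional curve $E_6$ over each point is a section of the resulting fibration $S\to\PP^1$. The curves $E_1,\dots,E_5$ lie in the fiber over $t=0$, since $F_0$ is singular at $P$ and $Q$ and so has higher multiplicity along these curves than the general member. These five curves form a chain of $(-2)$-curves, which connects $\widetilde B_1$ to $\widetilde B_2$: one end of the chain meets the branch of tangency order $1$ and the other end meets the branch of order $5$.

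Granting this, the fiber over $0$ is $\widetilde B_1+\widetilde B_2+\sum_{j=1}^5(E_j^P+E_j^Q)$. This is $12$ smooth rational components with reduced multiplicities, forming a cycle-like graph: two chains of length $5$ joining $\widetilde B_1$ and $\widetilde B_2$, plus any remaining direct intersection of $\widetilde B_1$ with $\widetilde B_2$. For the node count I would use the dual-graph identity $\delta=g-g_i+\ell-1$ with $g=2$, $g_i=0$ and $\ell=12$, which gives $\delta=13$. This is consistent with two $5$-chains contributing $6$ edges each, plus one further intersection. Reducedness and the absence of $(-1)$-curves would be read off from the self-intersections: $\widetilde B_i^2$ drops from $B_i^2=3$ by $6$ blow-ups to $-3$, while the $E_j$ with $j\le5$ are $(-2)$-curves. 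Together with the SNC check, this gives semistability.

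The main obstacle is the bookkeeping of the six successive blow-ups. One must verify that each center lies where claimed, namely on the strict transform of the branch tangent to the general member. One must also pin down exactly how $\widetilde B_1$, $\widetilde B_2$ and the chains meet, and in particular where the thirteenth node lies. Finally, every exceptional multiplicity in the fiber must be $1$, so the fiber is reduced. I would do this by writing $F_0+tF_1$ in the successive charts and computing the order of vanishing of $F_0$ along each $E_j$.
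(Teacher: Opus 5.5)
Your proposal is correct and is essentially the paper's proof. The restrictions $F_1|_{B_1}=-x(x-1)^5$ and $F_1|_{B_2}=-x^5(x-1)$, together with $6+6=D^2$, locate the base points; six blow-ups over each give the two $(-2)$-chains with $E_6$ horizontal; and the thirteenth node is the third intersection point of the two sections. The paper checks that node in local coordinates at $x=\infty$, while your genus count amounts to $\widetilde B_1\cdot\widetilde B_2=(C_0+3\Gamma)^2-2=1$. Your argument that the intersection multiplicity drops by one per blow-up because $F_1$ is smooth at $P,Q$ replaces the paper's explicit characteristic-$5$ charts.

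One slip to fix: at $P$ the linear part of $F_1$ is $x-w$, so $F_1$ (and the general member) is tangent to $B_2$, the order-$5$ branch, not to $B_1$. Your later chain description already uses the correct assignment.
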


\begin{proof}
The divisor $F_0=0$ is the union of
\[
 A:\ w=0,
 \qquad
 B:\ w+x(x-1)=0.
\]
The proper base points are
\[
 P_0=(0,0),\qquad P_1=(1,0).
\]
Restricting the second generator of the pencil to $A$ and $B$ gives
\begin{equation}\label{eq:restrictions}
 F_1|_A=-x(x-1)^5,
 \qquad
 F_1|_B=-x^5(x-1).
\end{equation}
Thus the local intersection multiplicities with $(A,B)$ are $(1,5)$ at $P_0$ and $(5,1)$ at $P_1$.  The restrictions in \eqref{eq:restrictions} are nonzero. Thus neither component of $F_0$ is a component of $F_1$. Hence the two generators have no common irreducible component.  In particular
\[
 I_{P_0}(F_0,F_1)=I_{P_1}(F_0,F_1)=6,
\]
and, up to a nonzero scalar,
\begin{equation}\label{eq:resultantw}
 \operatorname{Res}_w(F_0,F_1)=x^6(x-1)^6.
\end{equation}
Since $(2C_0+6\Gamma)^2=12$, these two points account for the whole base locus. 

%Since $\mathbb F_3$ is smooth and all the centres of the successive blow-ups are smooth points, the resulting surface is smooth. In the local calculations below, at each stage we divide the two generators by their common exceptional factor, so that the displayed equations define the transformed pencil. The factorizations of $F_0^{(i)}$ show inductively that every vertical exceptional component occurring in the fiber over (t=0) appears with multiplicity one. At the final infinitely near base point, the transformed generators have linearly independent linear parts; hence on the exceptional divisor of the sixth blow-up their ratio is nonconstant. Consequently the sixth exceptional divisor is horizontal, whereas the preceding five exceptional divisors are vertical.

We first resolve the chain over $P_0$.  In characteristic $5$,
\[
 x^4-(x-1)^4=-(1+x+x^2+x^3),
\]
so
\[
 F_1=-(1+x+x^2+x^3)w+x-x^6.
\]
For the first blow-up, take $w=x(1+u_1)$.  After dividing by the common exceptional factor,
\begin{align*}
 F_0^{(1)}&=x(1+u_1)(u_1+x),\\
 F_1^{(1)}&=-\bigl[x(1+x+x^2+x^4)+(1+x+x^2+x^3)u_1\bigr].
\end{align*}
The unique new base point is $x=u_1=0$.  For the second blow-up, put $u_1=x(u_2-1)$.  Then
\begin{align}
 F_0^{(2)}&=x(1-x+xu_2)u_2,\label{eq:P0F02}\\
 F_1^{(2)}&=x^3(1-x)-(1+x+x^2+x^3)u_2.\label{eq:P0F12}
\end{align}
The next base point is again the origin.  Successively setting
\[
 u_2=xu_3,\qquad u_3=xu_4,\qquad u_4=xu_5
\]
gives
\begin{align*}
 F_1^{(3)}&=x^2(1-x)-(1+x+x^2+x^3)u_3,\\
 F_1^{(4)}&=x(1-x)-(1+x+x^2+x^3)u_4,\\
 F_1^{(5)}&=(1-x)-(1+x+x^2+x^3)u_5,
\end{align*}
while $F_0^{(i)}=x\cdot(\text{unit})\cdot u_i$ for $i=2,3,4,5$.  On the fifth exceptional divisor, the last base point is
\[
 x=0,\qquad u_5=1.
\]
Writing $r=u_5-1$, the linear terms there are
\[
 F_0^{(5)}=x+\text{higher order terms},\qquad
 F_1^{(5)}=3x-r+\text{higher order terms}.
\]
They are linearly independent.  Therefore, one more blow-up removes the base point, with no seventh infinitely near point.

The calculation over $P_1$ is symmetric with the long-contact branch interchanged.  Put $r=x-1$.  Then
\[
 F_0=w(w+r+r^2),\qquad
 F_1=(1-r+r^2-r^3)w-r^5-r^6.
\]
Starting with $w=rv_1$ and then successively setting
\[
 v_1=rv_2,\qquad v_2=rv_3,\qquad v_3=rv_4,\qquad v_4=rv_5,
\]
one obtains
\begin{align*}
 F_1^{(2)}&=(1-r+r^2-r^3)v_2-r^3(1+r),\\
 F_1^{(3)}&=(1-r+r^2-r^3)v_3-r^2(1+r),\\
 F_1^{(4)}&=(1-r+r^2-r^3)v_4-r(1+r),\\
 F_1^{(5)}&=(1-r+r^2-r^3)v_5-(1+r).
\end{align*}
The final base point is $r=0$, $v_5=1$.  At that point
\[
 F_0^{(5)}=r+\text{higher order terms},\qquad
 F_1^{(5)}=(v_5-1)-2r+\text{higher order terms},
\]
so the sixth blow-up again separates the pencil completely.

Let $E_{0,1},\ldots,E_{0,5}$ and $E_{1,1},\ldots,E_{1,5}$ denote the vertical exceptional curves.  The sixth exceptional curve over each proper base point is horizontal.  

It remains only to check the intersection of $A$ and $B$ over $x=\infty$.  Set
\[
 z=x^{-1},\qquad W=wz^3.
\]
The two components become
\[
 W=0,\qquad W=-z(1-z),
\]
whose tangent lines at $z=0$ are distinct.  The second generator of the pencil is nonzero there, so this is an ordinary transverse node and not a base point.

Hence the fiber over $0$ consists of the two chains
\[
 A-E_{0,1}-E_{0,2}-E_{0,3}-E_{0,4}-E_{0,5}-B
\]
and
\[
 A-E_{1,5}-E_{1,4}-E_{1,3}-E_{1,2}-E_{1,1}-B,
\]
together with the direct transverse intersection $A\cap B$ at infinity.  Every component occurs with multiplicity one.  There are $12$ rational components and $6+6+1=13$ nodes.  
%As an independent consistency check, the dual graph of this fiber is connected and has (12) vertices and (13) edges. Hence its first Betti number is
%\[
%b_1(\Gamma_{F_0})=13-12+1=2.
%\]
%Since all irreducible components are rational and the fiber is nodal,
%\[
%p_a(F_0)
%=\sum_i g(F_{0,i})+b_1(\Gamma_{F_0})
%=0+2
%=2,
%\]
%in agreement with the genus of the general fiber.

This completes the proof.
\end{proof}

\begin{figure}[ht]
\centering
\begin{tikzpicture}[
  x=1.05cm,y=1.0cm,
  vertex/.style={circle,fill=black,inner sep=1.35pt},
  comp/.style={font=\scriptsize},
  self/.style={font=\scriptsize}
]
  % Main components A and B
  \node[vertex] (A) at (0,0) {};
  \node[vertex] (B) at (8,0) {};
  \node[comp,left=3pt of A] {$A$};
  \node[comp,right=3pt of B] {$B$};
  \node[self,above=5pt of A] {$(-3)$};
  \node[self,above=5pt of B] {$(-3)$};

  % Upper exceptional chain: E_{0,1},...,E_{0,5}
  \foreach \i/\x in {1/1.35,2/2.65,3/3.95,4/5.25,5/6.55}{
    \node[vertex] (U\i) at (\x,1.45) {};
    \node[comp,above=4pt of U\i] {$E_{0,\i}$};
    \node[self,above=18pt of U\i] {$(-2)$};
  }

  % Lower exceptional chain: E_{1,5},...,E_{1,1}
  \foreach \lbl/\x[count=\k from 1]
    in {5/1.35,4/2.65,3/3.95,2/5.25,1/6.55}{
    \node[vertex] (L\k) at (\x,-1.45) {};
    \node[comp,below=4pt of L\k] {$E_{1,\lbl}$};
    \node[self,below=18pt of L\k] {$(-2)$};
  }

  % Two chains
  \draw (A)--(U1)--(U2)--(U3)--(U4)--(U5)--(B);
  \draw (A)--(L1)--(L2)--(L3)--(L4)--(L5)--(B);

  % Curved edge representing A \cap B at infinity
  \draw (A) .. controls (2.2,0.55) and (5.8,0.55) .. (B);

\end{tikzpicture}
\caption{Dual graph of the fiber over $t=0$. 
The vertices $A$ and $B$ have self-intersection $-3$, and each vertical exceptional component
$E_{0,1},\dots,E_{0,5},E_{1,1},\dots,E_{1,5}$ has self-intersection $-2$.
The curved edge represents the transverse intersection $A\cap B$ over $x=\infty$.}
\label{fig:fiber0}
\end{figure}
%\begin{figure}[ht]
%\centering
%\begin{tikzpicture}[x=1.05cm,y=0.9cm,
%  vertex/.style={circle,fill=black,inner sep=1.25pt},
%  every node/.style={font=\scriptsize}]
%  \node[vertex] (A) at (0,0) {};
%  \node[vertex] (B) at (8,0) {};
%  \node[left=2pt of A] {$A$};
%  \node[right=2pt of B] {$B$};
%  \foreach \i in {1,...,5}{
%    \node[vertex] (U\i) at ({1.2*\i+0.2},1.2) {};
%    \node[vertex] (L\i) at ({1.2*\i+0.2},-1.2) {};
%  }
%  \draw (A)--(U1)--(U2)--(U3)--(U4)--(U5)--(B);
%  \draw (A)--(L1)--(L2)--(L3)--(L4)--(L5)--(B);
%  \draw[bend left=18] (A) to (B);
%  \node at (4,1.72) {$E_{0,1},\ldots,E_{0,5}$};
%  \node at (4,-1.72) {$E_{1,5},\ldots,E_{1,1}$};
%\end{tikzpicture}
%\caption{Dual graph of the fiber over $t=0$.  Each small black vertex represents an irreducible component.  The curved edge represents the transverse intersection $A\cap B$ over $x=\infty$.}
%\label{fig:fiber0}
%\end{figure}

\subsection{The collision fiber over $t=1$}

\begin{proposition}\label{prop:fiber1}
The fiber over $t=1$ is semistable with exactly two ordinary nodes, and the total space is smooth at both nodes.  Thus
\[
 \delta(F_1)=2.
\]
\end{proposition}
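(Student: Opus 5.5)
We work on the fiber of the pencil \eqref{eq:pencil} over $t=1$, that is, on the curve $F_0+F_1=0$ in $\FF_3$. The first task is to locate its singular points. In characteristic $5$ we have $-\tfrac14=1$, so the two characteristic-zero singular parameters $1$ and $-1/4$ collide here. By \eqref{eq:disc-affine} the affine discriminant then vanishes to order $2$ at $t=1$. The factor $(4t+1)$ of the leading coefficient $a_6$ also vanishes at $t=1$, so one branch point of the hyperelliptic model \eqref{eq:hyper} moves to $x=\infty$. The curve itself is unaffected: it remains a member of $|2C_0+6\Gamma|$ with $p_a=2$.

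\textbf{Step 1: find the singular points.} We work with the equation $y^2=P(x,1)$ in the affine chart. We factor $P(x,1)$ over $\FF_5$, with coefficients reduced mod $5$, and expect a single repeated root $x=x_0$ of multiplicity exactly $2$. Separately, we examine the chart at $x=\infty$ using the coordinates $z=x^{-1}$, $W=wz^3$ from the proof of \cref{prop:fiber0}. There we expect the degree drop of $P$ to produce a second double branch point. Together these account for the discriminant order. For each candidate point we check that both partial derivatives of $F_0+F_1$ vanish, and that no other point of the fiber on $\FF_3$ is singular.

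\textbf{Step 2: check that each singularity is an ordinary node.} At each singular point we compute the quadratic part of the local equation. For $y^2=(x-x_0)^2u(x)$ with $u(x_0)\neq0$, the quadratic part is $y^2-u(x_0)(x-x_0)^2$. Since $p=5\neq2$, this form is nondegenerate, so the point is an ordinary node. At infinity we do the same computation in the $(z,W)$ chart. We must also confirm that neither point is one of the base points $P_0,P_1$ or their infinitely near points, since those were already resolved and lie on the horizontal sections. This follows because the fiber near $P_0$ and $P_1$ is governed by the separated linear terms found in \cref{prop:fiber0}.

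\textbf{Step 3: smoothness of the total space.} Locally the total space of the resolved pencil is the hypersurface $\{F_0+tF_1=0\}$ in the coordinates $(\text{point},t)$. At a node $q$ of the fiber over $t=1$, this hypersurface is smooth exactly when $\partial_t(F_0+tF_1)=F_1(q)\neq0$. Equivalently, $q$ is not a base point of the pencil, and this was checked in Step 2. In the hyperelliptic picture the condition says that the local equation has the form $y^2=(x-x_0)^2-c(t-1)+\cdots$ with $c\neq0$; that is, $\partial_tP(x_0,1)\neq0$. We verify this directly at both points.

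\textbf{Step 4: conclude.} The fiber has exactly two ordinary nodes, is reduced, and is irreducible or connected nodal. Hence it is semistable and $\delta(F_1)=2$. As a consistency check, the local Milnor number at each node is $1$, which matches the total discriminant contribution of order $2$ at $t=1$.

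The main obstacle is the point at infinity. The collision of $1$ with $-1/4$ makes the leading coefficient vanish, so that node is not visible in the affine sextic model. It has to be analysed in the chart at $x=\infty$, where we must confirm simultaneously that the singularity is nodal and not a tacnode, and that $F_1$ is nonzero there. We would attempt this first by direct substitution in the $(z,W)$ coordinates.
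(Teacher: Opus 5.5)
Your plan is correct and follows the paper's proof. The paper computes $P(x,1)=-(x+2)^2(x^2-x+1)$ and $Q(z,1)=z^6P(z^{-1},1)=z^2(z-2)^2(z^2-z+1)$, which gives ordinary nodes at $x=3$ and $z=0$, and then checks $\partial_tP(3,1)=4\neq0$ and $\partial_tQ(0,1)=1\neq0$; your observation that this last check is automatic, because neither node is a base point of the pencil, is a valid and more conceptual shortcut.

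One correction to your heuristic: two branch points go to $x=\infty$ at $t=1$, not one. Modulo $5$, both $a_6(t)=t(t-1)$ and $a_5(t)=2t(t-1)$ vanish at $t=1$, so $P(x,1)$ has degree $4$. This degree drop of two is exactly what produces the double root of $Q(z,1)$ at $z=0$, and hence the node at infinity.
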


\begin{proof}
Substituting $t=1$ in \eqref{eq:P} gives
\begin{equation}\label{eq:t1affine}
 P(x,1)=-(x+2)^2(x^2-x+1).
\end{equation}
The quadratic factor has no repeated root in characteristic $5$, so the affine chart has one ordinary node at $x=3$.

The point over $x=\infty$ must also be checked.  Put
\[
 z=x^{-1},\qquad Y=yz^3,
\]
and let $Q(z,t)=z^6P(z^{-1},t)$.  Then
\begin{equation}\label{eq:t1infty}
 Q(z,1)=z^2(z-2)^2(z^2-z+1),
\end{equation}
so there is a second ordinary node at $z=0$.  Finally,
\[
 \left.\frac{\partial P}{\partial t}\right|_{(3,1)}=4\neq0,
 \qquad
 \left.\frac{\partial Q}{\partial t}\right|_{(0,1)}=1\neq0.
\]
Thus the total space is smooth at both nodes, with local form $uv=t-1$.  The proposition follows.
\end{proof}

\subsection{The fiber over $t=3$}

\begin{proposition}\label{prop:fiber3}
The fiber over $t=3$ is semistable with exactly two ordinary nodes, and the total space is smooth at both nodes.  Thus
\[
 \delta(F_3)=2.
\]
\end{proposition}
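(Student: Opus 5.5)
The plan is to follow the proof of Proposition~\ref{prop:fiber1}, working with the hyperelliptic model \eqref{eq:hyper}. The fiber over $t=3$ is the double cover $y^2=P(x,3)$ branched along the zeros of $P(\cdot,3)$ on $\PP^1$. It is nodal exactly where $P(\cdot,3)$ has double roots, and it is smooth where the roots are simple.

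\textbf{Step 1 (the sextic).} I will substitute $t=3$ into \eqref{eq:P} and reduce modulo $5$, using $x^4-(x-1)^4=-(1+x+x^2+x^3)$. The inner term becomes
\[
x(x-1)-3(1+x+x^2+x^3)=2x^3+3x^2+x+2 .
\]
So
\[
P(x,3)=12\,x(x-1)^5+(2x^3+3x^2+x+2)^2 .
\]
By Lemma~\ref{lem:discriminant} the leading coefficient is $4t(4t+1)$. At $t=3$ it equals $156\equiv1\pmod 5$, which is nonzero. Hence the point over $x=\infty$ is not a branch point and the whole branch locus lies in the affine chart. This differs from the fiber over $t=1$, where one node sat at $z=0$.

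\textbf{Step 2 (two double roots).} Since $\operatorname{Disc}_x(P)$ reduces to a scalar times $t^{13}(t-1)^2(t-3)^2$, the discriminant vanishes to order exactly $2$ at $t=3$. I therefore expect a factorization
\[
P(x,3)=(x-a)^2(x-b)^2R(x)
\]
with $a\ne b$ in $\FF_5$ or $\FF_{25}$, and $R$ a quadratic with distinct roots different from $a$ and $b$. I will find $a$ and $b$ by computing $\gcd(P(x,3),P_x(x,3))$ over $\FF_5$, or simply by testing the elements of $\FF_5$. I will then check that $R$ has nonzero discriminant and does not vanish at $a$ or $b$.

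The order-$2$ vanishing of the discriminant also rules out the degenerate configurations. A triple root, or a single root of multiplicity $4$, would force higher vanishing of the discriminant at $t=3$. Hence, once one double root is found, the second is forced and both are exact double roots. At each double root the local equation is $y^2=(x-a)^2\cdot\text{unit}$, which is an ordinary node. So the fiber has exactly two nodes and is otherwise smooth, including over $x=\infty$.

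\textbf{Step 3 (smoothness of the total space).} I will check that
\[
\frac{\partial P}{\partial t}(a,3)\ne0,\qquad \frac{\partial P}{\partial t}(b,3)\ne0,
\]
where
\[
\frac{\partial P}{\partial t}=4x(x-1)^5+2\bigl(x(x-1)+t(x^4-(x-1)^4)\bigr)\bigl(x^4-(x-1)^4\bigr).
\]
At a double root the middle factor $x(x-1)+3(x^4-(x-1)^4)$ need not vanish. The evaluation is therefore an explicit $\FF_5$ computation, and nonvanishing yields the local form $uv=t-3$, as in Proposition~\ref{prop:fiber1}. One must also note that this hyperelliptic model agrees with the blown-up pencil near $t=3$. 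This holds because the base points of the pencil lie over the fiber at $t=0$ only, as computed in Proposition~\ref{prop:fiber0}.

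The main obstacle is the explicit factorization in Step 2. Locating the double roots, which may only be defined over $\FF_{25}$, and confirming that the residual quadratic is separable is the only genuinely computational point. The discriminant count from Lemma~\ref{lem:discriminant} is what pins down the multiplicity structure without further case analysis. Together these steps give $\delta(F_3)=2$.
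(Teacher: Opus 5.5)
Your overall route is the paper's: reduce $P(x,3)$ modulo $5$, read the nodes off the double roots, and check $\partial P/\partial t\neq0$ there. But the step you say ``pins down the multiplicity structure'' is false. Vanishing of $\operatorname{Disc}_x(P)$ to order exactly $2$ at $t=3$ neither excludes a triple root nor forces a second double root.

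\begin{itemize}
\item A triple root $a$ with $\partial_tP(a,3)\neq0$ is locally $x^3=t-3$. Since $\operatorname{Disc}_x(x^3-s)=-27s^2$ and $27\not\equiv0\pmod 5$, it contributes exactly $2$, yet it gives a cuspidal, non-semistable fiber.
\item A single double root with $\partial_tP=0$ is locally $x^2=(t-3)^2$. It also contributes exactly $2$, giving one node on a singular total space.
\end{itemize}

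So the count is consistent with precisely the configurations the proposition must rule out. A double root with $\partial_tP\neq0$ contributes exactly $1$, so the discriminant count can only serve as a consistency check after Step~3. It cannot replace actually locating the roots. Also, testing the elements of $\FF_5$ finds nothing here.

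What closes the proof, and is the entire content of the paper's argument, is the explicit factorization
$P(x,3)=(x^2-x+1)(x^2-x+2)^2$ in characteristic $5$.

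\begin{itemize}
\item The two quadratics have discriminants $-3\equiv2$ and $-7\equiv3$, both nonzero non-squares. So each is separable, with roots in $\FF_{25}\setminus\FF_5$.
\item They are coprime because they differ by $1$.
\item You correctly observed that the leading coefficient is $1$. Together with the above, this gives exactly two ordinary nodes and nothing over $x=\infty$.
\item For Step~3, reduce modulo $x^2-x+2$, where $x^3\equiv-x-2$ and $x^6\equiv2$. This gives $\partial_tP|_{t=3}\equiv3$, so the gcd is $1$, as in the paper.
\end{itemize}

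Finally, ``the base points lie over $t=0$ only'' is inaccurate, since $P_0$ and $P_1$ lie on every member. What you need is:
\begin{itemize}
\item the vertical exceptional curves lie over $t=0$;
\item the $t=3$ member is smooth at $P_0$ and $P_1$, with linear parts $3(x-w)$ and $3w$;
\item the nodes have $x\notin\{0,1\}$.
\end{itemize}
Then the fiber of $S$ over $3$ is isomorphic to the hyperelliptic member.
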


\begin{proof}
In characteristic $5$ one has
\begin{equation}\label{eq:t3factor}
 P(x,3)=(x^2-x+1)(x^2-x+2)^2.
\end{equation}
The discriminant of $x^2-x+2$ is $1-8\equiv3\pmod5$, so this quadratic has two distinct roots over $k$, each giving an ordinary node.  Moreover
\[
 \gcd\left(x^2-x+2,
 \left.\frac{\partial P}{\partial t}\right|_{t=3}\right)=1
\]
in $k[x]$.  Hence the parameter derivative is nonzero at the two nodes, so the total space is smooth there.  Therefore $\delta(F_3)=2$.
\end{proof}

\subsection{The fiber over $t=\infty$}

\begin{proposition}\label{prop:fiberinfty}
The fiber over $t=\infty$ is the union of two rational components meeting transversely in three points.  The total space is smooth at all three nodes.  Hence the fiber is semistable and
\[
 \delta(F_\infty)=3.
\]
\end{proposition}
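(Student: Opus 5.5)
The fiber over $t=\infty$ is the member $F_1=0$ of the pencil \eqref{eq:pencil}. The plan is to describe this curve on $\FF_3$ directly and then check what the resolution of the base locus does to it. The structural point is that, by \eqref{eq:F0F1}, $F_1$ is \emph{linear} in the fiber coordinate $w$. After homogenization in $|2C_0+6\Gamma|$, the coefficient of $w^2$ vanishes identically. Hence the divisor $F_1=0$ contains the section at infinity of the chart, call it $\sigma_\infty$, as a component.

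First I will identify the residual component $R$. In characteristic $5$ we have
\[
 F_1=-q(x)\,w+x-x^6,\qquad q(x)=1+x+x^2+x^3 .
\]
So $R$ is the closure of the graph $w=(x-x^6)/q(x)$. It is a rational curve mapping isomorphically to the base of the ruling, i.e.\ a section, hence smooth. Since $x^4-1=(x-1)q(x)$, the roots of $q$ are $-1,\pm\sqrt{-1}$. These are three distinct points of $k$, none equal to $0$ or $1$. Over each root, $R$ goes to $w=\infty$ and meets $\sigma_\infty$. Over $x=\infty$, the chart $z=x^{-1}$, $W=wz^3$ shows the two components are disjoint there, since the numerator has degree $6$ and $q$ has degree $3$. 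I will verify by the intersection number, computing $R\cdot\sigma_\infty$ from their classes (which sum to $2C_0+6\Gamma$), that these three meetings are the only ones and that each is transverse. Equivalently, $R$ has a simple pole at each simple root of $q$. The arithmetic genus check is then $0+0+3-2+1=2$, as it must be.

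Next I will check that the resolution of the base locus does not change this picture. The base points $P_0=(0,0)$ and $P_1=(1,0)$ lie on $R$ and not on $\sigma_\infty$. At each of them $R$ is a smooth branch, so the six successive blow-ups in the proof of \cref{prop:fiber0} only replace $R$ by its strict transform, which is still smooth rational. The exceptional curves $E_{0,j}$ and $E_{1,j}$ are vertical in the fiber over $t=0$, and the sixth exceptional curves are horizontal. Hence no exceptional curve enters the fiber over $\infty$. The three nodes lie over $x\in\{-1,\pm\sqrt{-1}\}$, away from the base points, so they survive unchanged.

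Finally, smoothness of the total space at the nodes. Near a point of $\FF_3$ that is not a base point, $F_0\ne0$, and the fibration is locally the morphism $u=F_1/F_0$ to the coordinate $u=1/t$ at $\infty$. The surface is locally just $\FF_3$, so it is smooth. The local equation of the fiber is $F_1=0$, which is an ordinary node $uv=1/t$ up to units because the two smooth components cross transversely. I expect the main obstacle to be the bookkeeping in the homogenization: confirming that $\sigma_\infty$ (rather than $C_0$ with a further fiber component) is exactly the $w=\infty$ part of $F_1$ in $|2C_0+6\Gamma|$, and that there is no extra contact over $x=\infty$. I would settle this by computing the class of $R$ and the intersection number $R\cdot\sigma_\infty=3$. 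As a cross-check, I would use the hyperelliptic model at $s=1/t$, where $(sy)^2=(s\,x(x-1)+q^{*})^2+4s\,x(x-1)^5$ with $q^{*}=x^4-(x-1)^4$. At $s=0$ this gives the two components $sy=\pm q^{*}$, meeting at the three roots of $q$. The $s$-derivative there equals $4x(x-1)^5\ne0$.
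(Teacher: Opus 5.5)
Your proposal is correct and follows essentially the paper's route: the paper works in the chart $v=w^{-1}$ and splits the fiber $u=0$ into the section $v=0$ and a residual rational curve. The section $v=0$ is exactly $C_0$, so your $\sigma_\infty$ is $C_0$ and $R\in|C_0+6\Gamma|$. The two components meet at the three simple roots of $x^4-(x-1)^4$, and $\partial_u=1+x(x-1)v=1$ there, which is your condition $F_0\ne0$. The paper then notes that the base points lie in the finite $w$-chart, so the resolution does not touch these crossings.

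Your check $R\cdot C_0=3$, your explicit look at $x=\infty$, and your verification that no exceptional curve enters the fiber over $\infty$ make explicit points that the paper's proof leaves implicit.
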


\begin{proof}
Write $u=t^{-1}$.  Near the fiber $t=\infty$, the pencil is given on
$\mathbb F_3$ by
\[
 uF_0+F_1=0.
\]
To analyze the section at $w=\infty$, put $v=w^{-1}$.  After multiplying by
$v^2$, the equation becomes
\begin{equation}\label{eq:tinfty-original-model}
 u\bigl(1+x(x-1)v\bigr)
 +\bigl(x^4-(x-1)^4\bigr)v
 -x(x-1)^5v^2=0.
\end{equation}
In characteristic $5$,
\[
 x^4-(x-1)^4=-(1+x+x^2+x^3)
 =-(x+1)(x-2)(x+2).
\]
Thus the fiber $u=0$ is
\[
 v\Bigl(-(x+1)(x-2)(x+2)-x(x-1)^5v\Bigr)=0.
\]
It is the union of the section $v=0$ and a second rational component.  They
meet exactly at
\[
 x=-1,\qquad x=2,\qquad x=-2,
\]
and these intersections are transverse because the three roots of
$(x+1)(x-2)(x+2)$ are simple.  At each of these points $v=0$, and the partial
derivative of the left-hand side of \eqref{eq:tinfty-original-model} with
respect to $u$ equals
\[
 1+x(x-1)v=1.
\]
Hence the total space of the original pencil is smooth at all three nodes.
The base points resolved in Proposition~\ref{prop:fiber0} lie in the finite
$w$-chart and do not lie on the component $v=0$; the resolution therefore
does not alter these three local crossings.  It follows that the fiber over
$t=\infty$ is semistable and has exactly three nodes.
\end{proof}

\begin{figure}[ht]
\centering
\begin{tikzpicture}[
  x=1cm,y=1cm,
  vertex/.style={circle,fill=black,inner sep=1.35pt},
  title/.style={font=\scriptsize},
  weight/.style={font=\scriptsize}
]

%================================================
% Fibre over t = 1
%================================================
\begin{scope}[shift={(0,0)}]

  \node[title] at (0,1.45) {$t=1$};

  % One irreducible rational component
  \node[vertex] (A1) at (0,0) {};

  % Two self-nodes
  \draw (A1)
    edge[loop left,looseness=7,min distance=8mm] (A1);

  \draw (A1)
    edge[loop right,looseness=7,min distance=8mm] (A1);

\end{scope}

%================================================
% Fibre over t = 3
%================================================
\begin{scope}[shift={(4.2,0)}]

  \node[title] at (0,1.45) {$t=3$};

  % One irreducible rational component
  \node[vertex] (A3) at (0,0) {};

  % Two self-nodes
  \draw (A3)
    edge[loop left,looseness=7,min distance=8mm] (A3);

  \draw (A3)
    edge[loop right,looseness=7,min distance=8mm] (A3);

\end{scope}

%================================================
% Fibre over t = \infty
%================================================
\begin{scope}[shift={(8.6,0)}]

  \node[title] at (1.0,1.45) {$t=\infty$};

  % Two irreducible rational components
  \node[vertex] (L) at (0,0) {};
  \node[vertex] (R) at (2,0) {};

  % Their self-intersections
  \node[weight,below=5pt of L] {$(-3)$};
  \node[weight,below=5pt of R] {$(-3)$};

  % Three transverse intersection points
  \draw (L) -- (R);

  \draw (L)
    .. controls (0.7,0.65) and (1.3,0.65) ..
    (R);

  \draw (L)
    .. controls (0.7,-0.65) and (1.3,-0.65) ..
    (R);

\end{scope}

\end{tikzpicture}

\caption{Dual graphs of the stable fibers over $t=1$, $t=3$, and $t=\infty$.
Each black vertex represents an irreducible rational component, and a loop represents a self-node.
The fibers over $t=1$ and $t=3$ are irreducible rational curves with two nodes.
The fiber over $t=\infty$ consists of two rational $(-3)$-curves meeting transversely in three points.}

\label{fig:stablefibres}
\end{figure}

%\begin{figure}[ht]
%\centering
%\begin{tikzpicture}[vertex/.style={circle,fill=black,inner sep=1.35pt},
%  every node/.style={font=\scriptsize}]
%  % t=1
%  \node[vertex] (v1) at (0,0) {};
%  \draw (v1) edge[loop above,min distance=7mm] (v1);
%  \draw (v1) edge[loop below,min distance=7mm] (v1);
%  \node at (0,-1.05) {$t=1$};
%  % t=3
%  \node[vertex] (v3) at (4,0) {};
%  \draw (v3) edge[loop above,min distance=7mm] (v3);
%  \draw (v3) edge[loop below,min distance=7mm] (v3);
%  \node at (4,-1.05) {$t=3$};
%  % infinity
%  \node[vertex] (a) at (7.5,0) {};
%  \node[vertex] (b) at (10.5,0) {};
%  \draw (a) to[bend left=25] (b);
%  \draw (a) -- (b);
%  \draw (a) to[bend right=25] (b);
%  \node at (9,-1.05) {$t=\infty$};
%\end{tikzpicture}
%\caption{Stable dual graphs of the fibers over $1$, $3$, and $\infty$.  Each small black vertex represents an irreducible component; a loop represents a self-node.}
%\label{fig:stabletypes}
%\end{figure}

\subsection{The global fibration}

\begin{proposition}\label{prop:global}
Let $S$ be the surface obtained by the twelve blow-ups in Proposition~\ref{prop:fiber0}.  The resolved pencil defines a relatively minimal, non-isotrivial, semistable genus-$2$ fibration
\[
 f:S\longrightarrow\PP^1
\]
with exactly four singular fibers.  Moreover
\[
 K_{S/\mathbb P^1}^2=4,\qquad \deg f_*\omega_{S/\mathbb P^1}=2,\qquad \delta_f=20,
\]
and $S$ is rational and Hodge Witt.
\end{proposition}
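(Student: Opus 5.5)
The proof will assemble the local results of Propositions~\ref{prop:fiber0}--\ref{prop:fiberinfty} and then compute the invariants by elementary bookkeeping.

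\emph{Global structure.} The pencil $|2C_0+6\Gamma|$ on $\mathbb F_3$ has $D^2=12$. By Proposition~\ref{prop:fiber0} its base locus is resolved by six blow-ups over each of $P_0,P_1$, so twelve blow-ups in total. Hence $S$ is a blow-up of the rational surface $\mathbb F_3$, and $S$ is smooth and rational. The two last exceptional curves are sections, and the ten earlier ones are vertical, lying in the fiber over $t=0$. The morphism $f:S\to\PP^1$ has connected fibers, since the general member is an irreducible curve of arithmetic genus $p_a(D)=2$.

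\emph{Singular fibers.} By Lemma~\ref{lem:discriminant}, fibers away from $\{0,1,3,\infty\}$ are smooth. Over the finite part this follows from the discriminant. Near $w=\infty$ one must also check that no new singularities occur, which I would verify using the chart of \eqref{eq:tinfty-original-model}. The generic fiber is therefore smooth of genus $2$, and in particular separable. The four fibers over $0,1,3,\infty$ are reduced and nodal by Propositions~\ref{prop:fiber0}--\ref{prop:fiberinfty}, so $f$ is semistable with exactly $s=4$. The node counts give $\delta_f=13+2+2+3=20$.

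\emph{Relative minimality.} The only possible $(-1)$-curves lie in fiber components. In the fiber over $0$, the components $A$ and $B$ have self-intersection $-3$ and the $E$'s have self-intersection $-2$, as in the dual graph. The fibers over $1$ and $3$ are irreducible. The fiber over $\infty$ has two components meeting in $3$ points, so each has self-intersection $-3$. Thus no fiber contains a $(-1)$-curve. Non-isotriviality follows because the fiber over $0$ has a non-compact Jacobian part (its dual graph has $b_1=2$, so it is totally degenerate). An isotrivial semistable family has good reduction after base change, and semistable reduction is unique, which would force all fibers to be smooth.

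\emph{Numerics.} We have $c_2(S)=c_2(\mathbb F_3)+12=4+12=16$ and $K_S^2=8-12=-4$. Since $S$ is rational, $\chi(\mathcal O_S)=1$. By \eqref{eq:genus2formulas} this gives
\[
K_{S/\PP^1}^2=4,\qquad \chi_f=2,\qquad \delta_f=c_2+4=20.
\]
This agrees with the fiber count, which is a consistency check. Noether's formula also checks: $12\cdot 2=4+20$. Finally, $p_g(S)=0$ for a rational surface, so $S$ is Hodge Witt by Lemma~\ref{lem:pg0-hodge-witt}.

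\emph{Main obstacle.} The main difficulty is the smoothness of every fiber other than the four listed. This requires checking the points at $x=\infty$ and $w=\infty$, which the affine discriminant does not see, including the possibility of singularities on the horizontal sections. It also requires checking that the total space is smooth, which follows since $S$ is a blow-up of $\mathbb F_3$. I would handle these points with the explicit charts already used in Propositions~\ref{prop:fiber1}--\ref{prop:fiberinfty}.
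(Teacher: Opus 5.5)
Your proposal is correct and follows the paper's proof essentially step for step: the discriminant gives smoothness off $\{0,1,3,\infty\}$, Propositions~\ref{prop:fiber0}--\ref{prop:fiberinfty} give semistability and $\delta_f=13+2+2+3$, self-intersections give relative minimality, uniqueness of semistable models gives non-isotriviality, and the blow-up numerics together with Lemma~\ref{lem:pg0-hodge-witt} give the rest; your Zariski-lemma computation $C_i^2=-C_1\cdot C_2=-3$ for the fiber over $\infty$ is a valid substitute for the paper's explicit classes $C_0$ and $C_0+6\Gamma$ (with $9-12=-3$). The check you flag as the main obstacle takes one line: for finite $t$ the coefficient of $w^2$ in $F_0+tF_1$ is $1$, so the member misses $C_0$ (the section $w=\infty$) and is exactly the standard model of $y^2=P(x,t)$, which is smooth over $x=\infty$ once the leading coefficient $4t(4t+1)$ is nonzero, and that factor is already part of the resultant in Lemma~\ref{lem:discriminant}.
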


\begin{proof}
After the twelve blow-ups of Proposition~\ref{prop:fiber0}, the pencil has no base points. It therefore defines a morphism
\[
 f:S\longrightarrow\mathbb P^1.
\]
For a finite parameter $t\notin\{0,1,3\}$, the resultant in
Lemma~\ref{lem:discriminant} and the leading coefficient of $P(x,t)$ are both
nonzero.  Thus $P(x,t)$ has six distinct roots, and the corresponding
projective hyperelliptic curve is a smooth connected curve of genus $2$,
including at the points over $x=\infty$.  Hence the geometric generic fiber of $f$ is a smooth connected genus-$2$ curve. By Stein factorization, all fibers of $f$ are connected. The surface $S$ is smooth and hence Cohen--Macaulay. The base $\mathbb P^1$ is a smooth curve, and every fiber has pure dimension one. Therefore $f$ is flat.

No singular fiber can occur away from $0,1,3,\infty$.  The resolution of the
base locus introduces vertical exceptional components only in the fiber over
$t=0$; the final exceptional curve over each of the two proper base points is
horizontal.  The fiber over $t=\infty$ is treated directly in
Proposition~\ref{prop:fiberinfty}.  Therefore the singular fibers are exactly
those over
\[
0,\quad 1,\quad 3,\quad \infty.
\]
By Propositions~\ref{prop:fiber0}--\ref{prop:fiberinfty}, all four are reduced
nodal fibers.  Thus $f$ is a semistable genus-$2$ fibration and $s=4$.

We next check relative minimality.  In the fiber over $0$, the first five exceptional curves in each base-point chain have self-intersection $-2$ after the final blow-up. The sixth exceptional curve is horizontal. The proper transforms of $A$ and $B$ have self-intersection $-3$.  The fibers over
$1$ and $3$ are irreducible, so their unique irreducible component is the
fiber class and has self-intersection $0$.

For the fiber over $\infty$, the component $v=0$ in
\eqref{eq:tinfty-original-model} is the negative section $C_0$ of
$\mathbb F_3$, hence has self-intersection $-3$.  The residual component has
class $C_0+6\Gamma$, whose self-intersection on $\mathbb F_3$ is
\[
 (C_0+6\Gamma)^2=-3+12=9.
\]
All twelve base-point blow-ups lie on this residual component. None lies on $C_0$.  Its proper transform on $S$ therefore has self-intersection
$9-12=-3$.  Hence no singular fiber contains a $(-1)$-curve, and $f$ is
relatively minimal.

The fibration is not isotrivial. A semistable isotrivial fibration of curves over $\mathbb P^1$ is smooth by uniqueness of the semistable model; see \cite[p.~41]{Nguyen1998}. The four fibers displayed above are singular. Hence $f$ is non-isotrivial.

Since $S$ is obtained from $\mathbb F_3$ by twelve point blow-ups,
\[
 K_S^2=8-12=-4,
 \qquad
 c_2(S)=4+12=16.
\]
The surface is rational, so $\chi(\mathcal O_S)=1$.  Formula
\eqref{eq:genus2formulas} gives
\[
 K_{S/\mathbb P^1}^2=4,
 \qquad
 \chi_f=2,
 \qquad
 \delta_f=20.
\]
The local node count gives the independent check
\[
 13+2+2+3=20.
\]
Since $S$ is rational, $p_g(S)=0$. Hence $S$ is Hodge Witt by Lemma~\ref{lem:pg0-hodge-witt}.

Finally,
\[
 \deg f_*\omega_{S/\mathbb P^1}=\chi_f=2
 =\frac{2}{2}(4-2)
 =\frac{g}{2}(s-2).
\]
\end{proof}

\begin{remark}
Note that in characteristic zero, we always have the strict Arakeolv inequality \cite{Tan1995} 
\[
 \deg f_*\omega_{S/\mathbb P^1}<
 \frac{g}{2}(s-2).
\]

\begin{proof}[{\bf Proof of Theorem~\ref{thm:four-fibers}}]
Combine Lemma~\ref{lem:discriminant} and Propositions~\ref{prop:fiber0}--\ref{prop:global}.
\end{proof}

\end{remark}

\section{Applications of the Arakelov inequalities}\label{sec:applications}

\subsection{A canonical-class inequality}

Canonical-class inequalities are a classical tool in the study of degenerations of curves and in function-field Diophantine geometry. Over the complex numbers, Vojta proved the logarithmic canonical-class inequality. Tan later obtained its strict form for non-isotrivial semistable fibrations:
\begin{equation}\label{eq:classical-canonical-background}
 K_{S/C}^2<(2g-2)(2b-2+s);
\end{equation}
see \cite{Vojta1987,Tan1995,Beauville2002}. Lu--Tan--Zuo developed higher-dimensional analogues \cite{LuTanZuo2017}.

The situation in positive characteristic is different. Szpiro proved that a semistable fibration with nonzero Kodaira--Spencer class satisfies
\begin{equation}\label{eq:szpiro-canonical-background}
 K_{S/C}^2<4g(g-1)(2b-2+s);
\end{equation}
see \cite[Th\'eor\`eme~3]{Szpiro1981}.  Thus this general positive-characteristic estimate has a coefficient quadratic in the fiber genus. Yuan--Zhang \cite{YuanZhang2014} combined \eqref{eq:szpiro-canonical-background} with the slope inequality and obtained
\[
 \chi_f\le g^2(2b-2+s).
\]

Under the Hodge Witt hypothesis, Theorems~\ref{thm:hodge-witt} and~\ref{thm:P1} give canonical-class inequalities with coefficients linear in $g$. Note that for $g\ge3$, the coefficient $6g$ is smaller than $4g(g-1)$. 

\begin{corollary}[Canonical-class inequality]\label{cor:canonical-class}
Under the hypotheses of Theorem~\ref{thm:hodge-witt},
\begin{equation}\label{eq:canonical-refined}
 K_{S/C}^2
 \le
 6g(2b-2+s)+6(r_f-G_f)-\delta_f .
\end{equation}
In particular,
\begin{equation}\label{eq:canonical-simple}
 K_{S/C}^2
 \le
 6g(2b-2+s)-s
 <6g(2b-2+s).
\end{equation}
If $C=\mathbb P^1$ and the hypotheses of Theorem~\ref{thm:P1} hold, then the stronger estimate
\begin{equation}\label{eq:canonical-P1-refined}
 K_{S/\mathbb P^1}^2
 \le 6g(s-2)-6b_1(S)-\delta_f
 \le 6g(s-2)-6b_1(S)-s
\end{equation}
holds.
\end{corollary}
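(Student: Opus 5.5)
Use Noether's formula in relative form: for a semistable fibration, $12\chi_f = K_{S/C}^2 + \delta_f$. This follows from $12\chi(\OO_S)=K_S^2+c_2(S)$ together with \eqref{eq:chi-f-def}, \eqref{eq:c2-fibration} and $K_{S/C}^2=K_S^2-8(g-1)(b-1)$. Hence
\[
 K_{S/C}^2 = 12\chi_f-\delta_f .
\]
The plan is to substitute the refined Arakelov inequality \eqref{eq:main-refined} with $T^{0,2}(S)=0$ (Hodge Witt) into this identity:
\[
 K_{S/C}^2\le 6g(2b-2+s)+6(r_f-G_f)-\delta_f,
\]
which is exactly \eqref{eq:canonical-refined}. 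I will check that the constant terms in the Noether identity cancel, using $c_2(S)=4(g-1)(b-1)+\delta_f$. This bookkeeping is the only step needing care.

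For \eqref{eq:canonical-simple}, invoke Corollary~\ref{cor:G-r}: since $s\ge2$, we have $G_f\ge r_f$, so the middle term is non-positive. Then bound $\delta_f\ge s$, because each singular fiber of a semistable fibration has at least one node (a reduced nodal fiber that is singular has $\delta_i\ge1$). This gives $K_{S/C}^2\le 6g(2b-2+s)-s$, and strictness follows from $s\ge2>0$.

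For \eqref{eq:canonical-P1-refined}, with $C=\mathbb P^1$ I combine the identity instead with Theorem~\ref{thm:P1}: $\chi_f\le\frac g2(s-2)-\frac12 b_1(S)$. Multiplying by $12$ yields $K^2_{S/\mathbb P^1}\le 6g(s-2)-6b_1(S)-\delta_f$. Applying $\delta_f\ge s$ again gives the final inequality. The main point to double-check is the relative Noether formula in positive characteristic. It holds since Noether's formula is characteristic-free, and \eqref{eq:c2-fibration} is assumed earlier in the paper.
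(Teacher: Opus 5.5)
Your proposal is correct and is essentially the paper's proof. You combine the relative Noether formula $12\chi_f=K_{S/C}^2+\delta_f$ with \eqref{eq:main-refined} at $T^{0,2}(S)=0$, then use $G_f\ge r_f$ from Corollary~\ref{cor:G-r} and $\delta_f\ge s$, and for $C=\mathbb P^1$ substitute Theorem~\ref{thm:P1} instead, exactly as the paper does. Your only addition is deriving the relative Noether formula from the absolute one via $K_{S/C}^2=K_S^2-8(g-1)(b-1)$ and \eqref{eq:c2-fibration}, which the paper simply invokes.
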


\begin{proof}
Since $S$ is Hodge Witt, Proposition~\ref{prop:correction} gives
\[
 \chi_f\le \frac g2(2b-2+s)+\frac12(r_f-G_f).
\]
Using the relative Noether formula gives
\[
 K_{S/C}^2
 \le
 6g(2b-2+s)+6(r_f-G_f)-\delta_f.
\]
Since $s\ge2$, Corollary~\ref{cor:G-r} gives $G_f\ge r_f$. Hence
\[
 K_{S/C}^2\le6g(2b-2+s)-\delta_f.
\]
Every singular semistable fiber contains at least one node, so $\delta_f\ge s$, which proves \eqref{eq:canonical-simple}.

When $C=\mathbb P^1$, Theorem~\ref{thm:P1} and the relative Noether formula give
\[
 K_{S/\mathbb P^1}^2
 =12\chi_f-\delta_f
 \le 6g(s-2)-6b_1(S)-\delta_f.
\]
Using again $\delta_f\ge s$ proves \eqref{eq:canonical-P1-refined}.
\end{proof}

\subsection{A Szpiro-type inequality}\label{subsec:szpiro}

The Szpiro inequality is an inequality relating discriminant and conductor for elliptic curves. For a semistable elliptic fibration, a fiber of type $I_n$ has discriminant order $n$ and conductor exponent $1$. Thus the total discriminant degree counts nodes, while the conductor records the support of bad reduction. Szpiro's inequality compares these two quantities \cite{Szpiro1990}. For a semistable fibration of curves of genus $g\ge2$, the natural analogues are the total number of nodes
\[
 \delta_f=\sum_{t\in\Sigma}\delta(F_t)
\]
and the logarithmic degree $2b-2+s$.  Over $\mathbb C$, Beauville proved the strict higher-genus Szpiro inequality
\[
 \delta_f<(4g+2)(2b-2+s),
\]
by combining the slope inequality with the strict canonical-class inequality \cite{Beauville2002}.  Tan--Xu subsequently developed further optimal Szpiro-type inequalities for curves over function fields \cite{TanXu2015}.

Note that in positive characteristic, we only have the strict canonical-class inequality \eqref{eq:szpiro-canonical-background}.

Under the intrinsic Hodge Witt hypothesis, Theorem~\ref{thm:hodge-witt} recovers the same linear coefficient $4g+2$ as Beauville's higher-genus Szpiro inequality. No lifting assumption is needed, but the resulting inequality is non-strict. Theorem~\ref{thm:P1} gives an additional correction term over $\mathbb P^1$. Thus the following result is a positive-characteristic analogue of Beauville's estimate in the Hodge Witt setting.

\begin{corollary}[Szpiro-type inequality]\label{cor:szpiro}
Under the hypotheses of Theorem~\ref{thm:hodge-witt},
\begin{equation}\label{eq:szpiro}
 \delta_f\le (4g+2)(2b-2+s).
\end{equation}
If, in addition, $C=\mathbb P^1$ and the hypotheses of Theorem~\ref{thm:P1} hold, then
\begin{equation}\label{eq:szpiro-P1-refined}
 \delta_f
 \le (4g+2)(s-2)-\frac{2(2g+1)}{g}\,b_1(S).
\end{equation}
\end{corollary}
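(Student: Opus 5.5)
The plan is to combine the relative Noether formula, the slope inequality, and the Arakelov bounds of Theorems~\ref{thm:hodge-witt} and~\ref{thm:P1}. For a semistable fibration the relative Noether formula reads
\[
 12\chi_f=K_{S/C}^2+\delta_f,
\]
which was already used in the proof of Corollary~\ref{cor:canonical-class}. So $\delta_f=12\chi_f-K_{S/C}^2$, and it suffices to bound $K_{S/C}^2$ from below by a multiple of $\chi_f$ and $\chi_f$ from above.

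For the lower bound I would invoke the slope inequality
\[
 K_{S/C}^2\ge\frac{4(g-1)}{g}\,\chi_f
\]
for relatively minimal fibrations of genus $g\ge2$ with $\chi_f\ge 0$. Substituting it gives
\[
 \delta_f\le\Bigl(12-\frac{4g-4}{g}\Bigr)\chi_f=\frac{2(4g+2)}{g}\,\chi_f .
\]
The coefficient is positive, so any upper bound on $\chi_f$ transfers directly to $\delta_f$.

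Under the hypotheses of Theorem~\ref{thm:hodge-witt} we have $\chi_f\le\frac g2(2b-2+s)$, and \eqref{eq:szpiro} follows at once. Over $\mathbb P^1$, Theorem~\ref{thm:P1} gives $\chi_f\le\frac g2(s-2)-\frac12 b_1(S)$. Multiplying by $2(4g+2)/g$ yields
\[
 \delta_f\le(4g+2)(s-2)-\frac{4g+2}{g}\,b_1(S),
\]
which is exactly \eqref{eq:szpiro-P1-refined}, since $4g+2=2(2g+1)$.

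The main obstacle is justifying the slope inequality in characteristic $p$. Xiao's proof uses only two ingredients: the Harder--Narasimhan filtration of $f_*\omega_{S/C}$, together with the nefness of $\omega_{S/C}$, and Clifford's theorem on the generic fiber. Both are characteristic-free for fibrations with smooth generic fiber, and Moriwaki's version also holds in positive characteristic. One caveat is that Harder--Narasimhan filtrations are not preserved by Frobenius pullback in characteristic $p$. If Xiao's argument were to need strong semistability, I would instead cite the characteristic-free proof of Cornalba--Harris and Moriwaki via stability of Hilbert points, which works for semistable fibrations over any algebraically closed field. Finally, $\chi_f\ge0$ is needed so that the direction of the inequalities is preserved; it holds because $f_*\omega_{S/C}$ is nef, which is Szpiro's semipositivity in characteristic $p$.
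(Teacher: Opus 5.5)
Your argument is the same as the paper's proof. The relative Noether formula $12\chi_f=K_{S/C}^2+\delta_f$ together with the slope inequality gives $\delta_f\le\frac{4(2g+1)}{g}\chi_f$, and Theorems~\ref{thm:hodge-witt} and~\ref{thm:P1} are then substituted. Your arithmetic for both \eqref{eq:szpiro} and \eqref{eq:szpiro-P1-refined} is correct.

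What needs repair is your justification of the slope inequality in characteristic $p$. The paper does not derive it; it simply cites Yuan--Zhang \cite{YuanZhang2014}, who proved it in arbitrary characteristic. The reasons you give are not right:

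\begin{enumerate}
\item Xiao's argument is not characteristic-free. It needs the divisors $M_i-\mu_iF$ attached to the Harder--Narasimhan pieces of $f_*\omega_{S/C}$ to be nef. In characteristic $p$ this would require strong semistability, not just semistability.
\item It also needs $\mu_{\min}(f_*\omega_{S/C})\ge0$, i.e.\ semipositivity of $f_*\omega_{S/C}$. That fails in characteristic $p$: Moret-Bailly's genus-$2$ families over $\mathbb P^1$ have a Hodge bundle with a negative-degree summand. So your claim that $f_*\omega_{S/C}$ is nef ``by Szpiro'' is incorrect. What Szpiro gives is nefness of $\omega_{S/C}$.
\item The fallbacks you name (Cornalba--Harris, Moriwaki) are not obviously characteristic-free either.
\end{enumerate}

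You also do not need $\chi_f\ge0$. It does hold, via $12\chi_f=K_{S/C}^2+\delta_f$ and nefness of $\omega_{S/C}$, but neither step uses a sign condition. The step $\delta_f=12\chi_f-K_{S/C}^2\le\bigl(12-\frac{4(g-1)}{g}\bigr)\chi_f$ holds for any $\chi_f$, and the later step only multiplies an upper bound by a positive constant.

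Replace your last paragraph with the citation of Yuan--Zhang and the proof is complete.
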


\begin{proof}
Write $K_{S/C}^2:=c_1(\omega_{S/C})^2$.  For the present semistable fibration of genus $g\ge2$, the slope inequality in arbitrary characteristic gives \cite{YuanZhang2014}
\begin{equation}\label{eq:slope-inequality}
 K_{S/C}^2\ge \frac{4(g-1)}{g}\chi_f.
\end{equation}
For a semistable fibration, the relative Noether formula is
\begin{equation}\label{eq:relative-noether}
 12\chi_f=K_{S/C}^2+\delta_f.
\end{equation}
Hence
\begin{equation}\label{55}
\delta_f
 \le \left(12-\frac{4(g-1)}g\right)\chi_f
 =\frac{4(2g+1)}g\chi_f.
\end{equation}

Applying Theorem~\ref{thm:hodge-witt} gives
\[
 \delta_f\le(4g+2)(2b-2+s).
\]
When $C=\mathbb P^1$, Theorem~\ref{thm:P1} gives instead
\[
 \chi_f\le \frac g2(s-2)-\frac12 b_1(S),
\]
Substituting this bound into \eqref{55} gives \eqref{eq:szpiro-P1-refined}.
\end{proof}

\section{Further discussion}\label{sec:further}

%\subsection{Comparison with Lu--Tan--Yu--Zuo}
%
%In  \cite{LTYZ2014}, over $\mathbb C$, Lu--Tan--Yu--Zuo prove
%\begin{equation}\label{eq:LTYZ-main}
% h^{1,1}(S)\ge 2bq_f+2+A_f,
%\end{equation}
%where $q(S)=h^1(S,\OO_S)$ and $q_f=q(S)-b$ .  The term $2+A_f$ comes from the fiber class, independent components of the singular fibers, and an ample divisor.  This part is algebraic.  In our proof it is replaced by the crystalline estimate
%\[
% m^{1,1}(S)\ge2+A_f.
%\]
%
%The extra term $2bq_f$ in \eqref{eq:LTYZ-main} comes from holomorphic one-forms on the total surface.  The complex proof uses Hodge decomposition, complex conjugation, and a Stokes argument.  These tools are not available in the same form in characteristic $p$.  We do not try to reproduce this term.  Instead, we use the Albanese estimate
%\[
% r_f\le2g(F_i)
%\]
%for every singular fiber.  The remaining difference is measured by the domino number $T^{0,2}(S)$.  The Hodge Witt condition makes this number zero.

\subsection{Why the Jost--Zuo method does not apply directly}

We recall the main idea of the Jost--Zuo proof.  Over $\mathbb C$, the variation of Hodge structure $R^1f_*\mathbb C$ gives a logarithmic Higgs bundle
\[
 (E,\theta)=\bigl(E^{1,0}\oplus E^{0,1},\theta\bigr),
 \qquad
 E^{1,0}=f_*\omega_{S/C},
 \quad
 E^{0,1}=R^1f_*\OO_S.
\]
The Higgs field is the logarithmic Kodaira--Spencer map
\[
 \theta:E^{1,0}\longrightarrow
 E^{0,1}\otimes\Omega_C^1(\log\Sigma).
\]
The polarization identifies $E^{0,1}$ with $(E^{1,0})^\vee$. A key input is that $(E,\theta)$ is semistable of degree zero. This follows from the Hermitian Yang--Mills theory used in \cite{JostZuo2002}. Let $K=\ker\theta$.  Semistability controls the degree of $K$ and the Higgs subbundle generated by the image of $\theta$.  These two estimates give
\[
 \deg E^{1,0}
 \le \frac{\rk E^{1,0}-\rk K}{2}
       \deg\Omega_C^1(\log\Sigma)
 \le \frac g2\deg\Omega_C^1(\log\Sigma).
\]
Thus the Arakelov inequality comes from semistability of the logarithmic Higgs bundle.

This argument cannot be used directly in our setting.  In positive characteristic, the logarithmic Higgs bundle attached to a semistable fibration need not be semistable.  In \cite{Ren-Xu}, we construct a generically ordinary semistable fibration with nonzero Kodaira--Spencer map whose logarithmic Higgs bundle is not semistable.  Therefore generic ordinarity and a nonzero Kodaira--Spencer map do not provide the semistability needed in the Jost--Zuo argument.  This is the main reason for using the cohomology of the total surface instead.

\subsection{The correction term}

Proposition~\ref{prop:correction} shows that the same argument works without the Hodge Witt assumption, but with one correction term.  If $s\ge2$, then
\[
 \deg f_*\omega_{S/C}
 \le \frac g2(2b-2+s)+T^{0,2}(S).
\]

\begin{example}\label{ex:supersingular-K3}
The correction term in Proposition~\ref{prop:correction} can be
nonzero even for a semistable fibration over $\mathbb P^1$.

Let $X$ be a supersingular K3 surface over $k$. Its formal Brauer group has infinite height. Moreover, $H^2_{\mathrm{cris}}(X/W)\otimes_W K$ is isoclinic of slope $1$; see, for instance, \cite{ArtinMazur1977,Ogus1979,Nygaard1980}.
In particular, $X$ is not Hodge Witt.

Choose a sufficiently ample divisor $H$ on $X$ and a general
Lefschetz pencil in $|H|$.  Such pencils exist after replacing a
very ample line bundle by a suitable power
\cite[Exp.~XVII, Thm.~2.5]{SGA7II}.  Let
$ \pi:S\longrightarrow X$
be the blow-up of the base locus of the pencil.  The pencil then
defines a semistable fibration
$ f:S\longrightarrow\mathbb P^1$
whose singular fibers have a single node.

Let $g$ be the genus of a smooth fiber.  Since $K_X=0$, adjunction
gives $H^2=2g-2.$
The base locus of the pencil therefore consists of $2g-2$ points.
Since $c_2(X)=24$, blowing up these points gives
$$
        c_2(S)=24+(2g-2)=2g+22.
$$

On the other hand, the semistable fibration formula
$$
        c_2(S)=4(g-1)(b-1)+\delta_f
$$

with $b=0$ yields
$$
        \delta_f=c_2(S)+4(g-1)=6g+18.
$$

For a general Lefschetz pencil each singular fiber has exactly one
node.  Hence
\begin{equation}\label{eq:K3-pencil-singular-fibers}
s=\delta_f=6g+18.
\end{equation}

The domino number can also be computed explicitly.  Since a
supersingular K3 surface has $b_2(X)=22$ and all slopes of
$H^2_{\mathrm{cris}}(X/W)\otimes K$ are equal to $1$, one has
$$
        m^{1,1}(X)=22.
$$

Moreover,
$$
        h_W^{1,1}(X)
        =c_2(X)-2\chi(\mathcal O_X)+b_1(X)
        =24-4=20.
$$

Using
$$
        h_W^{1,1}(X)
        =m^{1,1}(X)-2T^{0,2}(X),
$$

we obtain
$$
        T^{0,2}(X)=1.
$$

Blowing up a point adds one slope-$1$ class to $H^2_{\mathrm{cris}}$
and increases both $m^{1,1}$ and $h_W^{1,1}$ by one.  Consequently
$$
        T^{0,2}(S)=T^{0,2}(X)=1.
$$

Thus $S$ is not Hodge Witt, and the correction term occurring in
Proposition~\ref{prop:correction} is genuinely nonzero.

\end{example}

\begin{remark}
A natural question is whether the relative Frobenius geometry can control this domino number.  For a generically ordinary semistable fibration with nonzero Kodaira--Spencer map, it would be enough to prove
\[
 2T^{0,2}(S)\le G_f-r_f.
\]
This estimate would imply the classical Arakelov inequality without the Hodge Witt assumption on the total surface.
\end{remark}

\medskip
\noindent{\bf Acknowledgements}

The authors express their appreciation to Professor
Kang Zuo, Professor Rong Du and Professor Sheng-Li Tan for their help and encouragement over the years.
Both authors are supported by NSF of China. H. Sun is partially supported by  Natural Science
Foundation of Shanghai (Grant No. 23ZR1447000). W.-Y. Xu is partially supported by Science and Technology Innovation Plan of Shanghai (Grant No. 23JC1403200). %We also thank the referees sincerely for pointing out mistakes and useful detailed suggestions.

\vspace{0.2cm}
\noindent\textbf{AI declaration.}
The authors proposed the research questions and mathematical strategy of this paper. AI tools (GPT 5.6 sol) were used to assist with local calculations (including checks in the characteristic $5$ construction) and for English editing. The authors independently verified the arguments and are responsible for the final mathematical content.


\begin{thebibliography}{99}

\bibitem{Arakelov1971}
S.~J. Arakelov,
\emph{Families of algebraic curves with fixed degeneracies},
Math. USSR-Izv. \textbf{5} (1971), 1277--1302.

\bibitem{ArtinMazur1977}
M.~Artin and B.~Mazur,
\emph{Formal groups arising from algebraic varieties},
Ann. Sci. \'Ecole Norm. Sup. (4) \textbf{10} (1977), no.~1, 87--131.



\bibitem{Beauville1981}
A.~Beauville,
\emph{Le nombre minimum de fibres singuli\`eres d'un courbe stable sur $\mathbb P^1$},
Ast\'erisque \textbf{86} (1981), 97--108.

\bibitem{Beauville2002}
A.~Beauville,
\emph{The Szpiro inequality for higher genus fibrations},
in \emph{Algebraic Geometry: A Volume in Memory of Paolo Francia},
Walter de Gruyter, Berlin, 2002, 61--63.

\bibitem{ChenLuZuo2016}
K.~Chen, X.~Lu and K.~Zuo,
\emph{On the Oort conjecture for Shimura varieties of unitary and orthogonal types},
Compos. Math. \textbf{152} (2016), no.~5, 889--917.


\bibitem{Crew1985}
R.~Crew,
\emph{On torsion in the slope spectral sequence},
Compositio Math. \textbf{56} (1985), no.~1, 79--86.



\bibitem{Ekedahl1986}
T.~Ekedahl,
\emph{Diagonal complexes and $F$-gauge structures},
Travaux en Cours, vol.~18, Hermann, Paris, 1986.

\bibitem{Faltings1983}
G.~Faltings,
\emph{Arakelov's theorem for abelian varieties},
Invent. Math. \textbf{73} (1983), 337--347.


\bibitem{vanDerGeerKatsura2000}
G.~van der Geer and T.~Katsura,
\emph{On a stratification of the moduli of K3 surfaces},
J. Eur. Math. Soc. \textbf{2} (2000), no.~3, 259--290.


\bibitem{GongGu2020}
C.~Gong and Y.~Gu,
\emph{Number of singular fibres of surface fibrations over $\mathbb P^1$},
Pacific J. Math. \textbf{309} (2020), no.~2, 313--331.


\bibitem{Illusie1979}
L.~Illusie,
\emph{Complexe de de Rham--Witt et cohomologie cristalline},
Ann. Sci. \`Ecole Norm. Sup. (4) \textbf{12} (1979), no.~4, 501--661.

\bibitem{IllusieRaynaud1983}
L.~Illusie and M.~Raynaud,
\emph{Les suites spectrales associées au complexe de de Rham--Witt},
Publ. Math. IHÉS \textbf{57} (1983), 73--212.



\bibitem{JostZuo2002}
J.~Jost and K.~Zuo,
\emph{Arakelov type inequalities for Hodge bundles over algebraic varieties. Part I: Hodge bundles over algebraic curves},
J. Algebraic Geom. \textbf{11} (2002), no.~3, 535--546.


\bibitem{Liu1996}
K.-F.~Liu,
\emph{Geometric height inequalities},
Math. Res. Lett. \textbf{3} (1996), no.~5, 693--702.

\bibitem{LuShengZuo2009}
J.~Lu, M.~Sheng and K.~Zuo,
\emph{An Arakelov inequality in characteristic $p$ and upper bound of $p$-rank zero locus},
J. Number Theory \textbf{129} (2009), no.~12, 3029--3045.

\bibitem{LTYZ2014}
J.~Lu, S.-L.~Tan, F.~Yu and K.~Zuo,
\emph{A new inequality on the Hodge number $h^{1,1}$ of algebraic surfaces},
Math. Z. \textbf{276} (2014), no.~1--2, 543--555.

\bibitem{LTXZ2016}
X.~Lu, S.-L.~Tan, W.-Y.~Xu and K.~Zuo,
\emph{On the minimal number of singular fibers with non-compact Jacobians for families of curves over $\mathbb P^1$},
J. Math. Pures Appl. (9) \textbf{105} (2016), no.~5, 724--733.

\bibitem{LuZuo2017}
X.~Lu and K.~Zuo,
\emph{The Oort conjecture on Shimura curves in the Torelli locus of hyperelliptic curves},
J. Math. Pures Appl. (9) \textbf{108} (2017), no.~4, 532--552.

\bibitem{LuZuo2019}
X.~Lu and K.~Zuo,
\emph{The Oort conjecture on Shimura curves in the Torelli locus of curves},
J. Math. Pures Appl. (9) \textbf{123} (2019), 41--77.


\bibitem{LuTanZuo2017}
J.~Lu, S.-L.~Tan and K.~Zuo,
\emph{Canonical class inequality for fibred spaces},
Math. Ann. \textbf{368} (2017), no.~3--4, 1311--1332.

\bibitem{LuTanZuo2018}
X.~Lu, S.-L.~Tan and K.~Zuo,
\emph{Singular fibers and Kodaira dimensions},
Math. Ann. \textbf{370} (2018), no.~3--4, 1717--1728.


\bibitem{Nguyen1995}
K.~V. Nguyen,
\emph{On Beauville's conjecture and related topics},
J. Math. Kyoto Univ. \textbf{35} (1995), no.~2, 275--298.

\bibitem{Nguyen1998}
K.~V. Nguyen,
\emph{A remark on semi-stable fibrations over $\mathbb P^1$ in positive characteristic},
Compositio Math. \textbf{112} (1998), no.~1, 41--44.



\bibitem{Nygaard1980}
N.~O. Nygaard,
\emph{Higher de Rham--Witt complexes of supersingular $K3$ surfaces},
Compositio Math. \textbf{42} (1980), no.~2, 245--271.



\bibitem{Ogus1979}
A.~Ogus,
\emph{Supersingular $K3$ crystals},
Ast\'erisque \textbf{64} (1979), 3--86.


\bibitem{Ren-Xu}
Z.-H.~Ren and W.-Y.~Xu,
\emph{Ordinary semistable fibrations with unstable logarithmic Higgs bundles},
Preprint.

\bibitem{Saito1995}
M.-H.~Saito,
\emph{On upperbounds of Mordell--Weil ranks of higher genus fibrations},
in \emph{Algebraic Cycles and Related Topics (Kitasakado, 1994)},
World Scientific, River Edge, NJ, 1995, 37--42.


\bibitem{SGA7II}
P.~Deligne and N.~Katz (eds.),
\emph{Groupes de monodromie en g\'eom\'etrie alg\'ebrique (SGA 7 II)},
Lecture Notes in Mathematics, Vol.~340,
Springer-Verlag, Berlin--New York, 1973.

\bibitem{Szpiro1981}
L.~Szpiro,
\emph{Propri\'et\'es num\'eriques du faisceau dualisant relatif},
Ast\'erisque \textbf{86} (1981), 44--78.



\bibitem{Szpiro1990}
L.~Szpiro,
\emph{Discriminant et conducteur des courbes elliptiques},
Ast\'erisque \textbf{183} (1990), 7--18.

\bibitem{Tan1995}
S.-L.~Tan,
\emph{The minimal number of singular fibers of a semistable curve over $\mathbb P^1$},
J. Algebraic Geom. \textbf{4} (1995), no.~3, 591--596.

\bibitem{TanTuZamora2005}
S.-L.~Tan, Y.-P.~Tu and A.~G.~Zamora,
\emph{On complex surfaces with 5 or 6 semistable singular fibers over $\mathbb P^1$},
Math. Z. \textbf{249} (2005), no.~2, 427--438.


\bibitem{TanXu2015}
S.-L.~Tan and W.-Y.~Xu,
\emph{On Szpiro inequality for semistable families of curves},
J. Number Theory \textbf{151} (2015), 36--45.



\bibitem{ViehwegZuo2006}
E.~Viehweg and K.~Zuo,
\emph{Numerical bounds for semi-stable families of curves or of certain higher-dimensional manifolds},
J. Algebraic Geom. \textbf{15} (2006), no.~4, 771--791.

\bibitem{Vojta1987}
P.~Vojta,
\emph{Diophantine approximations and value distribution theory},
Lecture Notes in Mathematics, Vol.~1239,
Springer-Verlag, Berlin, 1987.

\bibitem{Weibel1994}
C.~A. Weibel,
\emph{An Introduction to Homological Algebra},
Cambridge Studies in Advanced Mathematics, vol.~38,
Cambridge University Press, Cambridge, 1994.

%\bibitem{Gros1985}
%M.~Gros,
%\emph{Classes de Chern et classes de cycles en cohomologie de
%Hodge Witt logarithmique},
%M\'em. Soc. Math. France (N.S.) \textbf{21} (1985), 1--87.
%



\bibitem{Xu2018}
W.-Y.~Xu,
\emph{On the Arakelov inequality in positive characteristic},
Math. Z. \textbf{289} (2018), no.~1--2, 109--117.

\bibitem{YuanZhang2014}
X.~Yuan and T.~Zhang,
\emph{Relative Noether inequality on fibered surfaces},
Adv. Math. \textbf{259} (2014), 89--115.

\end{thebibliography}
\end{document}